\theoremstyle{plain}
\newtheorem{introtheorem}{Theorem}
\newtheorem{theorem}{Theorem}[section]
\newtheorem{proposition}[theorem]{Proposition}
\newtheorem{lemma}[theorem]{Lemma}
\newtheorem*{proposition*}{Proposition}
\theoremstyle{definition}
\newtheorem{definition}[theorem]{Definition}
\newtheorem{example}[theorem]{Example}
\theoremstyle{remark}
\newtheorem{remark}[theorem]{Remark}
\newtheorem*{thanks*}{Acknowledgements}
\newcommand{\secref}[1]{Section~\ref{#1}}
\newcommand{\thmref}[1]{Theorem~\ref{#1}}
\newcommand{\propref}[1]{Proposition~\ref{#1}}
\newcommand{\lemref}[1]{Lemma~\ref{#1}}
\newcommand{\exref}[1]{Example~\ref{#1}}
\def\B{{\mathcal B}}
\def\Ad{{\mathrm{Ad}}}
\def\Pad{{\mathrm{Pad}}}
\def\Z{{\mathbb Z}}
\def\Q{{\mathbb Q}}
\def\G{\mathcal G}
\def\rat{{(\Q)}}
\def\map{\mathrm{map}}
\def\cat0{\mathrm{cat}_0}
\begin{document}

\title[Fibrewise Rational H-Spaces]
{Fibrewise Rational H-Spaces}

\author{Gregory  Lupton}

\address{Department of Mathematics,
           Cleveland State University,
           Cleveland OH 44115}

\email{G.Lupton@csuohio.edu}

\author{Samuel Bruce Smith}

\address{Department of Mathematics,
   Saint Joseph's University,
   Philadelphia, PA 19131}

\email{smith@sju.edu}

\date{\today}

\keywords{Fibrewise homotopy theory, H-space, Hopf Theorem, Leray-Samelson Theorem, adjoint bundle, Sullivan minimal model}

\subjclass[2000]{Primary: 55P62; Secondary: 55P45, 55R70}

\begin{abstract}
We  prove   fibrewise versions of  classical theorems of Hopf and Leray-Samelson.  Our results imply  the fibrewise H-triviality after rationalization of a certain class of fibrewise H-spaces. They apply, in particular, to universal adjoint bundles.  From this, we may retrieve a result of Crabb and Sutherland \cite{CS}, which is used there as a crucial step in establishing their main finiteness result.     
\end{abstract}

\maketitle

\section{Introduction}%
\label{sec:intro}
Let  $G$ be a connected H-space of the homotopy type of a finite CW complex.    
Hopf's theorem gives  an  isomorphism of graded algebras $$H^*(G; \Q) \cong \land W$$ where $W$ is a finite-dimensional, oddly graded,    rational vector  space \cite[Cor.3.8.13]{Wh}. Topologically, Hopf's theorem implies  a homotopy equivalence
$$G_\Q \simeq  \prod_i K(W_{2i+1}, 2i+1).$$   
Here, $G_\Q$ denotes the rationalization of $G$, in the sense of \cite{HMR}. 

The Leray-Samelson theorem  strengthens these statements when the multiplication for $G$  is homotopy associative. In this case, the isomorphism $H^*(G; \Q) \cong \land W$ is one  of Hopf algebras,  where $\land W$ has the standard (co)multiplication $C\colon \land W \to \land W \otimes \land W$ given by $C(w) = w\otimes 1 + 1\otimes w$ for $w \in W$  \cite[Th.3.8.14]{Wh}.  Further, the homotopy equivalence $ G_\Q \simeq  \prod_i K(W_{2i+1}, 2i+1)$  above is realized by an H-equivalence, where the product of Eilenberg-Mac\,Lane spaces comes equipped with the unique homotopy-associative  and homotopy-commutative multiplication.

In this paper, we prove fibrewise versions of the Hopf and Leray-Samelson  theorems  using Sullivan  model techniques. We work in the setting of fibrewise pointed spaces and maps  over a fixed base $B$ as in the text \cite{CJ}.    We assume all ordinary spaces are of the homotopy type of (well-pointed) CW complexes of finite type.  We say such a space is {\em homotopy finite} if it is homotopy equivalent to a finite complex.  A  {\em fibrewise space} $X$ means a fibration $p\colon X \to B$, and a {\em fibrewise basepoint} a (choice of) section $\sigma \colon B \to X$ of $p$.  Then a fibrewise pointed space consists of $X$ together with fixed choices of $p$ and $\sigma$.  (Fibrewise pointed)  maps between fibrewise pointed spaces  are maps over $B$ (via $p$) and under $B$ (via $\sigma$).  Homotopies are fibrewise pointed and denoted $\sim_B^B.$ The relation between spaces over and under $B$   of being fibrewise pointed homotopy equivalent will be denoted by $\simeq_B^B$.
A \emph{fibrewise multiplication} on $X$  is a fibrewise pointed map  $m \colon X \times_B X \to X$  where $X \times_B X \to B$ is the fibre product.
A  \emph{fibrewise H-space}  $X$ is a fibrewise pointed space  $X$ together with a fibrewise multiplication $m$ such that the maps $m \circ (1 \times c) \circ \Delta$ and $m \circ (c \times 1) \circ \Delta$ are both fibrewise pointed homotopic to the identity of $X$, where $\Delta \colon X \to X\times_B X$  is the diagonal and $c$ is the fibrewise null map $c = \sigma\circ p\colon X \to X$. Fibrewise homotopy associativity is defined, in the usual way, with the appropriate identity holding up to fibrewise pointed homotopy.   See \cite{CJ} for a basic discussion of fibrewise H-spaces.

Suppose $X, m$ is a fibrewise H-space and let $G$ denote the fibre over a basepoint (in the ordinary sense) of B.  Then $G$ is  an H-space with the restricted multiplication written $m_G$. We say  $X$ is {\em fibrewise trivial} (as a fibrewise pointed space) if there is a fibrewise pointed equivalence $f \colon X \to B \times G.$   Here it is understood that $B \times G$ is the fibrewise space with projection $p_1\colon B \times G \to B$ and basepoint $i_1 \colon B \to B \times G$.    
A fibrewise H-space $X$  admits  a  {\em fibrewise rationalization}  $X_\rat$ over $B$ and a localization map $X \to X_\rat$ which is a map over $B$ \cite{May}.  We say $X$ is \emph{rationally fibrewise trivial}  if $X_\rat$ is fibrewise trivial, that is, if $X_\rat$ is fibrewise pointed homotopically equivalent to a trivial fibration over $B$.   Our first result is the following which may be viewed as a fibrewise version of Hopf's theorem: 

\begin{introtheorem}\label{main}
Let $X$ be a fibrewise    H-space over  $B$  with fibre $G$, with $X$, $B$, and $G$ all of finite type, and either $X$  or $G$  homotopy finite.   Suppose: \begin{itemize} \item[(1)] $B$ is  simply connected with zero odd-dimensional rational cohomology;  \item[(2)] $G$ has zero even-dimensional rational homotopy; and  \item[(3)]  $X$ is nilpotent as a space.  \end{itemize}  Then $X$ is rationally fibrewise  trivial. 
\end{introtheorem}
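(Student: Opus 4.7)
The plan is to work with relative minimal Sullivan models. The fibration $p\colon X \to B$ with fibre $G$ admits a relative minimal Sullivan model of the form $(A_B \otimes \Lambda V, d)$, where $A_B$ is the minimal Sullivan model of $B$. Hypothesis (2) combined with Hopf's theorem gives that $V$ is concentrated in odd degrees and the fibre differential on $\Lambda V$ vanishes; by minimality, $d$ carries $V$ into $A_B^+ \otimes \Lambda V$. The goal is to show that, after a suitable choice of generators, $d$ vanishes on $V$. This identifies the relative model with $(A_B, d_{A_B}) \otimes (\Lambda V, 0)$, which is a Sullivan model for $B \times G_\rat$; the Grivel-Halperin-Thomas correspondence between such models and fibrations then produces the required fibrewise pointed rational equivalence $X_\rat \simeq_B^B B \times G_\rat$.

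The fibrewise multiplication $m\colon X \times_B X \to X$ is modelled by an $A_B$-algebra chain map
\[
m^*\colon A_B \otimes \Lambda V \to (A_B \otimes \Lambda V) \otimes_{A_B} (A_B \otimes \Lambda V),
\]
and the fibrewise H-space identities translate into counit relations with respect to the augmentation $\epsilon\colon A_B \otimes \Lambda V \to A_B$ induced by $\sigma$. For each $v \in V$ this forces $m^*(v) = v \otimes 1 + 1 \otimes v + \Phi(v)$ with $\Phi(v) \in A_B \otimes \Lambda^+V \otimes \Lambda^+V$.

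I would prove $d|_V = 0$ by induction on the degree of generators of $V$, modifying the chosen representatives at each step. For the lowest-degree generator $v$ of odd degree $n$, simple connectedness of $B$ forces $d(v) \in A_B^{n+1}$, and a pure degree count forces $\Phi(v) = 0$ since $\Lambda^+V$ contains no elements of small enough degree. The chain-map equation $d(m^*(v)) = m^*(d(v))$ then collapses to $2\,d(v) = d(v)$, yielding $d(v) = 0$. For the inductive step on $v$ of degree $n$, one has $d(v) \in A_B \otimes \Lambda V^{<n}$ and $\Phi(v) \in A_B \otimes \Lambda^+V^{<n} \otimes \Lambda^+V^{<n}$, and $d$ vanishes on $\Lambda V^{<n}$ by the inductive hypothesis. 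Expanding $d(m^*(v)) = m^*(d(v))$ and comparing coefficients against a basis of $\Lambda V^{<n} \otimes \Lambda V^{<n}$ forces some components of $d(v)$ to equal $d_{A_B}$ of components of $\Phi(v)$; the remaining components are cocycles in $A_B$ of odd degree and hence exact by hypothesis (1). Thus every component of $d(v)$ is exact, and modifying $v$ by suitable decomposable elements of $A_B \otimes \Lambda V^{<n}$ achieves $d(v) = 0$.

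The main obstacle is managing the bookkeeping of the inductive step: the correction $\Phi(v)$ is generally nonzero beyond the lowest degree, and $m^*$ on lower generators may carry its own nontrivial corrections, so expansion of the chain-map equation requires care. The decisive ingredient is the interplay between the oddness of $V$ from (2) and the vanishing of odd rational cohomology of $B$ from (1): obstructions to $d(v) = 0$ that are not killed directly by the chain-map equation live in odd-degree $A_B$-cohomology, which is zero, and can therefore be absorbed into a change of generator.
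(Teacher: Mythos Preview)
Your proposal is correct and takes essentially the same approach as the paper: exhibit a change of generators in the relative Sullivan model making the differential vanish on $W$, using the chain-map identity for the comultiplication together with hypothesis~(1), and then pass to spaces via the model--fibration correspondence (the paper's \thmref{homtrivial}). The paper organizes the induction by first killing the word-length-one part of $D(w_k)$---this is precisely where hypothesis~(1) enters, since those coefficients are odd-degree $d_B$-cycles---and then running a separate sub-induction on word length $r \geq 2$ in which the chain-map equation alone shows every coefficient $b_I$ is a $d_B$-boundary of a coefficient of $\Phi(w_k)$; this matches the division of labor you sketch between ``some components'' and ``remaining components.''
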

If $X$ is a fibrewise H-space that is rationally fibrewise trivial, then we have 
$$H^*(X; \Q) \cong H^*(B;   \Q) \otimes \land W \hbox{\, and \, } X_\rat \simeq^B_B B \times K,$$
where the cohomology isomorphism is one of algebras, and $K$ is a product of rational Eilenberg-Mac\;Lane spaces. 

Our main result strengthens Theorem \ref{main} if we add the hypothesis of associativity on the fibrewise multiplication.  It may be viewed as a fibrewise version of the Leray-Samelson theorem.
We  say   $X, m$  is  {\em fibrewise H-trivial} (as a fibrewise H-space) if   $X$ is  fibrewise trivial via a fibrewise pointed, fibrewise equivalence $f \colon X \to B \times G$    satisfying  $$f \circ m \sim^B_B (1 \times_B m_{G}) \circ (f \times_B f) \colon X \times_B X \to B \times G.$$
Here   $1 \times_B  m_G$ is the fibrewise  multiplication on the product $B \times G$ induced by $m_G$.   
The multiplication $m \colon X\times_B X \to X$  induces  a multiplication $ m_{(\Q)} \colon  X_\rat \times_B X _\rat \to X_\rat$ and the fibrewise localization $\ell_{(X)}\colon X \to X_{(\Q)}$ gives a fibrewise H-map $X, m \to X_\rat, m_\rat$.   We say $X, m$ is  \emph{rationally fibrewise H-trivial} if $X_{(\Q)}, m_\rat$ is    fibrewise H-trivial.

\begin{introtheorem}  \label{main1} Let $X$ be a fibrewise homotopy associative   H-space over  $B$   satisfying the hypotheses in Theorem \ref{main}.   Then $X$ is rationally fibrewise H-trivial. 
\end{introtheorem}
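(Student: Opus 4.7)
The plan is to combine Theorem \ref{main}, which provides a fibrewise trivialization, with a fibrewise analogue of the classical Leray--Samelson argument carried out in Sullivan models. By hypothesis, the fibre $G$ has minimal model $(\land W, 0)$ with $W$ finite-dimensional and concentrated in odd degrees, and Theorem \ref{main} identifies the relative minimal model of $X_\rat\to B$ with $(A_{PL}(B)\otimes\land W, d_B\otimes 1)$ as $A_{PL}(B)$-DGAs up to homotopy---precisely the relative model of $B\times K$, where $K$ is the relevant product of rational Eilenberg--Mac\,Lane spaces.

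Under this identification, the fibrewise multiplication $m_\rat$ corresponds to an $A_{PL}(B)$-DGA map
$$\mu\colon A_{PL}(B)\otimes\land W \longrightarrow A_{PL}(B)\otimes\land W\otimes\land W,$$
which by counitality satisfies $\mu(w) = w\otimes 1 + 1\otimes w + \bar\mu(w)$ for $w\in W$, with $\bar\mu(w)$ in the decomposable part $A_{PL}(B)\otimes(\land W)^+\otimes(\land W)^+$. Fibrewise homotopy associativity of $m$ translates to DGA-homotopy coassociativity of $\mu$. The standard product multiplication on $B\times K$ corresponds to the primitive comultiplication $\mu_0(w)=w\otimes 1+1\otimes w$, and the goal is to produce an $A_{PL}(B)$-DGA automorphism of $A_{PL}(B)\otimes\land W$---a change of fibrewise trivialization---that intertwines $\mu$ with $\mu_0$ up to DGA homotopy.

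The heart of the proof is an inductive fibrewise Leray--Samelson argument: order a basis of $W$ by degree and successively primitivize each generator via a substitution $w\mapsto w+\tau(w)$ with $\tau(w)\in A_{PL}(B)\otimes\land W_{<|w|}$. For the generator $w$ of minimal degree, $\bar\mu(w)$ vanishes on bidegree grounds, since any element of $(\land W)^+\otimes(\land W)^+$ has degree at least $2|w|>|w|$, forcing the $A_{PL}(B)$ coefficients to have negative degree. For higher generators, $\bar\mu(w)$ involves only previously-primitivized generators and is constrained by both $d_B$-closedness and homotopy coassociativity; the oddness of $W$ together with the vanishing of $H^{\mathrm{odd}}(B;\Q)$ ensure the corresponding obstruction complex is acyclic in the relevant degrees, so $\bar\mu(w)$ can be cancelled by a suitable $\bar\mu(\tau(w))$. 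Translating the resulting $A_{PL}(B)$-DGA isomorphism of coalgebra structures back to spaces via fibrewise Sullivan--deRham theory produces the desired fibrewise H-equivalence $X_\rat\to B\times K$.

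The main obstacle is the inductive step, where one must verify that the obstruction to primitivizing a generator---a $d_B$-closed element satisfying the appropriate coassociativity identity---can indeed be realized as $\bar\mu(\tau)$ for some $\tau$ in the correct filtration. This is a homological argument exploiting the interplay between the oddness of $W$ and the even-only rational cohomology of $B$, paralleling the homological input of Theorem \ref{main}.
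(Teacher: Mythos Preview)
Your overall architecture is the same as the paper's: pass to the relative Sullivan model, use the fibrewise Hopf result to reduce to the case $D = d_B\otimes 1$, and then inductively ``primitivize'' a K--S basis of $W$ by adjusting the comultiplication to $C_0$.  That is exactly the strategy of \thmref{LS} together with \thmref{eqtrivial}.

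The gap is in your account of the inductive step.  You write that the obstruction $\bar\mu(w_k)$ ``can be cancelled by a suitable $\bar\mu(\tau(w_k))$,'' suggesting a change of generators alone suffices.  It does not.  With lower generators already primitive, the image of $\tau \mapsto \bar\mu(\tau)$ on decomposables in $\B\otimes\land W$ consists exactly of elements of the form $\sum_I b_I\,(S_I - w_I - w'_I)$ (in the paper's notation $S_I = \prod (w_{i_j}+w'_{i_j})$).  A general $\bar\mu(w_k)$ is \emph{not} of this form: for instance an even-length term $\beta^\varepsilon_I w^\varepsilon_I$ with a single binary pattern $\varepsilon$ and odd-degree $\beta^\varepsilon_I \in \B$ is a $D$-boundary, and is removed by a DG homotopy, not by any change of generators.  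The paper's argument therefore separates two mechanisms: (i) even-length pieces $P_{2m}$ have odd-degree $\B$-coefficients, hence are $d_B$-exact by the hypothesis $H^{\mathrm{odd}}(\B)=0$, and are homotoped away; (ii) odd-length pieces $P_{2m+1}$ have even-degree coefficients and need not be exact---here associativity is used in an essential, structural way.

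That second mechanism is the genuine content you have not supplied.  Homotopy coassociativity, applied to $w_k$ and restricted to the lowest nontrivial word-length, yields the identity $\alpha(P'')+\beta(P'') = \gamma(P'')+\delta(P'')$ for the \emph{non-exact} part $P''$ of $P_{2m+1}$, where $\alpha,\beta,\gamma,\delta$ are the four obvious inclusions/diagonals $\B\otimes\land W^{\otimes 2}\to \B\otimes\land W^{\otimes 3}$.  The key combinatorial fact (\propref{prop:alpha beta gamma delta}, proved via Lemmas~\ref{lem:no repeats}--\ref{lem:alpha beta gamma}) is that any length-$r$ ``mixed'' element satisfying this identity, with $r\geq 3$, is forced to have the form $\sum_I b_I(S_I - w_I - w'_I)$ with strictly increasing $I$; only then can a change of generators $w_k \mapsto w_k + \sum_I b_I w_I$ absorb it.  Your phrase ``the corresponding obstruction complex is acyclic'' does not capture this: the cohomological hypothesis on $B$ and the oddness of $W$ by themselves do \emph{not} force $\bar\mu(w_k)$ into the image of $\bar\mu$ on decomposables (already over $\B=\Q$ one can write down non-coassociative comultiplications with $W$ odd that are not equivalent to $C_0$).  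Associativity is not merely a background constraint ``paralleling'' the homological input of \thmref{main}; it is a separate and indispensable ingredient, and you should indicate how it pins down the algebraic form of the obstruction.
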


 If   $X, m$ is rationally fibrewise $H$-trivial then the isomorphism $H^*(X; \Q) \cong H^*(B;   \Q) \otimes \land W$ above  is one of Hopf algebras over $H^*(B; \Q)$.  The equivalence $X_\rat  \simeq^B_B B \times K$   is via a fibrewise H-map.

 Theorems  \ref{main} and \ref{main1}  are      deduced from   algebraic results on fibrewise multiplications of models as we briefly explain now.     The fibration $p
\colon X \to B$ has a {\em relative   model} $\B \to \B \otimes \land
W$ where  $\B$ is a differential graded (DG) algebra model for $B$ and $\land W$ is the free  graded commutative algebra on $W \cong \pi_*(G) \otimes \Q$ with trivial differential (a Sullivan model for the H-space $G$) \cite[Pro.15.6]{FHT}. The fibrewise multiplication is modeled by a comultiplication map
$C \colon \B \otimes \land W \to \B \otimes \land W \otimes \land
W$
that is the identity on $\B$ and takes the form
$$C(w) = w + w'  + C_2(w) + C_3(w) + \cdots$$
for $w \in W$.  Here $C_r(w)$ denotes a term in $\B \otimes
\sum_{s=1}^{s=r-1}( \land^s W \otimes \land^{r-s} W)$.  Given $w \in W,$ we write   $w$ for  $w \otimes 1$ and $w'$ for $1 \otimes w.$   
In Theorem \ref{Hopf}, we prove that when $\B$ has evenly graded cohomology and $W$ is oddly-graded then the relative model is equivalent to one with a trivial differential on $W$.  Taking $\B$ trivial retrieves the ordinary (non-fibrewise) Hopf theorem. 
In Theorem \ref{LS}, we prove that, under these same hypotheses,   $C$ associative implies that $C$ is equivalent to the {\em standard comultiplication}:   $C_0(w) = w + w'$ for $w \in W.$ Here taking $\B$  to be  trivial retrieves
the ordinary Leray-Samelson theorem.

 Theorem \ref{main1}     retrieves   the following    result of   Crabb and Sutherland.  Let $G$ be a topological group and  $p_G \colon EG \to BG$ the universal principal $G$-bundle.   Form the {\em adjoint bundle}    
$$\Ad(p_G) \colon EG \times_G G  \to BG$$ where $G$ acts on itself by the adjoint action and $EG \times_G G $  is the quotient of $EG \times G $ by the diagonal action. This is a fibrewise group with the multiplication induced by that on the fibres.
  
\begin{proposition*}[Crabb-Sutherland {\cite[Pro.2.2]{CS}}] \label{CS}      Let $G$ be a compact, connected Lie group.  Then  $\Ad(p_G)$ is rationally fibrewise H-trivial.  
\end{proposition*}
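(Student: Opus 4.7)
The plan is to apply \thmref{main1} directly to the adjoint bundle, viewed as a fibrewise H-space over $B = BG$ with fibre $G$. First, I would observe that $X = EG \times_G G$ carries the structure of a fibrewise topological group under the multiplication induced fibrewise by that of $G$: this is well-defined on equivalence classes because conjugation acts on $G$ by group automorphisms, i.e.\ $[e\cdot h, h^{-1}g_1 h]\cdot [e\cdot h, h^{-1}g_2 h] = [e, g_1 g_2]$. The fibrewise basepoint is the section induced by the identity element. In particular the fibrewise multiplication is strictly, hence fibrewise homotopy, associative, so the structural hypothesis of \thmref{main1} is immediate.

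Next I would verify the remaining hypotheses of \thmref{main1}. Since $G$ is a compact Lie group it is homotopy finite (a finite CW complex), and then both $BG$ and the total space $X$ are of finite type. For hypothesis (1), $BG$ is simply connected because $G$ is connected, and by Borel's theorem $H^*(BG;\Q)$ is a polynomial algebra on even-degree generators, hence has no odd-dimensional rational cohomology. Hypothesis (2) is the classical Hopf theorem applied to the compact Lie group $G$: the rationalization $G_\Q$ is a product of odd-dimensional Eilenberg-Mac\,Lane spaces, and so the even rational homotopy of $G$ vanishes.

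The one step requiring an external input is hypothesis (3), the nilpotency of $X$. Since $G$ is an H-space, it is nilpotent as a space, and the base $BG$ is simply connected, so the fibration $G \to X \to BG$ has nilpotent fibre and simply connected (hence nilpotent, with trivial action) base. A standard fibration-theoretic result (e.g.\ Hilton-Mislin-Roitberg) then yields that $X$ is nilpotent.

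With all hypotheses in place, \thmref{main1} applies and directly gives that $\Ad(p_G)$ is rationally fibrewise H-trivial. The main obstacle, such as it is, lies not in any technical difficulty of the argument but in ensuring that the nilpotency hypothesis~(3) on the total space is recorded as a consequence of the structural features of the universal adjoint bundle rather than derived ad hoc; once this is folded into the statement, the proof is a direct application of the main theorem.
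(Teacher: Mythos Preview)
Your proof is correct and follows the same overall strategy as the paper: verify the hypotheses of \thmref{main1} for the universal adjoint bundle over $BG$ (the paper packages this as \thmref{thm:app} and \thmref{thm:gauge}, but the substance is identical). The one noteworthy difference is how nilpotency of $X = EG \times_G G$ is established. You argue directly from the fibration $G \to X \to BG$, using that an H-space fibre is nilpotent and the base is simply connected; the paper instead invokes the identification $EG \times_G G \simeq \map(S^1, BG)$ (the free loop space on $BG$) and then appeals to \cite[Th.II.3.11]{HMR} on nilpotency of mapping spaces out of finite complexes. Both routes are valid: the free-loop-space identification gives a clean one-line citation, while your fibration argument is more self-contained but requires unpacking the $\pi_1(E)$-action on the long exact sequence to see that nilpotency passes from fibre to total space.
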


\noindent{}This result forms the basis for the finiteness results of \cite{CS}.  We discuss this type of application further in \secref{sec:applications} below.
 
The paper is organized as follows.  In Section \ref{sec:prelim},  we consider preliminary issues  on modeling fibrewise H-spaces with Sullivan DG algebras.  In Section \ref{sec:LS}, we prove  fibrewise  versions of Hopf's theorem (Theorem \ref{Hopf}) and the Leray-Samelson theorem (Theorem \ref{LS}).  The former is an extension of an elementary argument for the original Hopf theorem using Sullivan models. The proof of Theorem \ref{LS} is considerably more involved and follows the line of argument of \cite{AL}. Theorems \ref{main} and \ref{main1} are direct consequences. 
    In Section \ref{sec:applications},  we apply Theorem \ref{main1} to   obtain  the rational fibrewise H-triviality of   a certain class of fibrewise groups admitting a universal example.     We deduce 
   the rational H-commutativity of the corresponding groups of sections, a class of groups including and generalizing the gauge groups of principal bundles. 

\begin{thanks*}
We thank the referee for correcting an oversight in the proof of \thmref{D=0} and also for suggesting a way to relax the hypotheses of \thmref{Hopf} and \thmref{LS} into their present form.
\end{thanks*}

\section{Rationalization of   Fibrewise H-spaces} \label{sec:prelim}

Our purpose in this section is to give    criteria on Sullivan models for fibrewise rational triviality and fibrewise rational H-triviality to hold.       First, we consider localization.   Note that, in the following, we are using localization in the sense of fibrewise localization as well as in the sense of localization of a nilpotent space.  Suppose $X$ is  just a fibrewise  pointed space $p \colon X \to B$ with basepoint $\sigma \colon B \to X$.  Assume is $X$ nilpotent as a space and $B$ is simply connected. Write $F$ for the fibre which is then  a nilpotent space by \cite[Th.2.2.2]{HMR}.  These hypotheses   imply the existence of a fibre square:   
$$ \xymatrix{ X \ar[r]^{\ell_X}  \ar[d]_p & X_\Q \ar[d]_{p_\Q} \\
 B \ar@/_/[u]_{\sigma} \ar[r]^{\ell_B} & B_\Q \ar@/_/[u]_{\sigma_\Q} }$$
in which the   horizontal maps are rationalizations (see \cite[p.67]{HMR}).   
The map $\sigma_{\Q}$ is the rationalization of $\sigma$ and is a section up to homotopy. 

These hypotheses on $X$  also ensure that $X$ admits a fibrewise rationalization
$$ \xymatrix{ X \ar[rr]^{\ell_{(X)}} \ar[dr]_p && X_{(\Q)} \ar[dl]^{p_{(\Q)}} \\
&  B \ar@/_/[ul]_{\sigma} \ar@/^/[ur]^{\sigma_{(\Q)}} &  }$$
by  \cite{May} (see \cite{L} also).    Here $\sigma_{(\Q)} =  \ell_{(X)}\circ \sigma$.  The map induced on fibres by $\ell_{(X)}$  is a rationalization map $F \to F_\Q$.  The latter construction has a universal property:  Let $g \colon X \to Y_\rat$ be  a fibrewise map over $B$ where $Y_\rat$ has fibre a rational space.  Then there is a map $h \colon X_\rat \to Y_\rat$ unique up homotopy over $B$, such that $g \sim_B h \circ \ell_{(X)}$ and $h$  \cite[Pro.6.1]{L}.  As   a consequence of this uniqueness,   we obtain $X_\rat$ as the pull-back of $p_\Q$ by $\ell_B$. 
It follows  that
$$\hbox{$  X_\Q$ fibrewise trivial} \ \ \Longrightarrow  \ \ \hbox{$X_\rat$ fibrewise trivial}.$$ 
Next  suppose $X$ is a  fibrewise H-space.   Then the fibrewise multiplication $m$ on $X$ induces one $m_\rat$ on $X_\rat$ that is unique up to fibrewise equivalence.    Similarly,  by universality  $m$  induces a  fibrewise multiplication $m_\Q$ on $X_\Q$  (although $m_\Q$ is only unique up to homotopy equivalence).  Since  $p_\rat \colon X_\rat \to B$ is the pull-back of $p_\Q \colon X_\Q \to B_\Q$ by $\ell_\Q \colon B \to B_\Q$   we see that  $m_\rat$ is equivalent to  the pull-back multiplication induced by $m_\Q.$
Consequently, we have:
$$\hbox{$ X_\Q , m_\Q$ fibrewise H-trivial} \ \ \Longrightarrow  \ \ \hbox{$ X_\rat, m_\rat$ fibrewise H-trivial}.$$
The two implications highlighted in the preceding discussion may be established as follows.  In the first case, from the (homotopy) pullback displayed above, standard arguments obtain a fibrewise pointed map $B\times F_\Q \to X_{(Q)}$ that is an (ordinary) homotopy equivalence.  Then this map is actually a fibrewise pointed homotopy equivalence, from, e.g. \cite[I.Th.13.2, II.Th.1.29]{CJ} (see also \cite{Egg}).  A similar line of argument may be applied to obtain the second implication.

Turning to Sullivan models,  
 let $(\land V, d_B)$   and $(\land W, d_F)$ denote, respectively,  the minimal Sullivan models for $B$ and $F.$  The fibration    $p \colon X \to B$ has a   {\em  relative minimal model}   (see \cite[Sec.15]{FHT}).  This   is an  inclusion of DG algebras $$I \colon (\land V, d_B) \to (\land V \otimes \land W, D).$$
 The DG algebra $ (\land V \otimes \land W, D)$ is a Sullivan  model for $X$ in the sense of \cite[Sec.12]{FHT}.  
 The differential $D$   satisfies   $D(w) - d_F(w) \in \land V^+ \otimes \land W $ for $w \in W$.  Furthermore, $W$ admits an ordered  basis $\{w_i\}$ that satisfies $D(w_i)\in 
\land V \otimes \land W(i)$ where $W(i)$ is spanned by the $w_j$ with $j < i$ (a K-S---for Koszul-Sullivan---basis).
The section $\sigma \colon B \to X$ induces a map $P \colon (\land V \otimes \land W, D) \to (\land V, d_B)$ which we may take to be the projection (cf. \cite[p.368]{FOT}).  
Notice that $P$ is a DG algebra map which implies that $  W \subset \land V \otimes \land W$ generates  a $D$-stable ideal. 
  
The relative minimal model construction extends the fundamental correspondence between homotopy classes of maps of rational spaces and DG homotopy classes of maps of minimal Sullivan algebras to the fibrewise setting.  Recall this correspondence in the ordinary case may be expressed in terms of a pair of  adjoint functors   (passage to Sullivan models and spatial realization) and implies a bijection
$[B'_\Q, B_\Q] \equiv [\B, \B']$
where,    the left-hand side is the   set of homotopy classes of maps between  rational nilpotent spaces and the right-hand side is the set of DG homotopy classes of maps between DG algebras    (cf. \cite{BG}).    

Now let  $X$ and $ X'$ be pointed fibrewise spaces, nilpotent as ordinary spaces,  over  a simply connected base  $B$.  Write  $[X', X]^B_B$ for the set of fibrewise pointed homotopy  classes of fibrewise pointed maps $X' \to X$ (see \cite[Sec.3]{CJ}).  Rationalizing, we obtain the corresponding set $[X'_\Q, X_\Q]^{B_\Q}_{B_\Q}$. 
 
 On the algebra side, let $(\land V, d_B) \to (\land V \otimes \land W, D)$ and $(\land V, d_B) \to (\land V \otimes \land W', D')$ denote the relative minimal models for $X$ and $X',$ respectively.     Let $\land(t, dt)$ denote the acyclic DG algebra with $|t| = 0$ and the $p_i \colon \land(t, dt) \to \Q$ the evaluations of $t$  at  $i = 0,1$. 
\begin{definition} \label{def:homotopic} 
Say two DG algebra maps $\psi_1, \psi_2 \colon \land V \otimes \land W \to \land V \otimes \land W'$ are {\em DG homotopic under  and over $ \land V, d_B$}, denoted by $\psi_1 \sim ^{\land V}_{\land V}\psi_2$, if there is a DG algebra map $$H \colon \land V  \otimes \land W \to \land V \otimes \land W' \otimes \land(t, dt)$$ satisfying $p_i \circ H = \psi_i$ with $H$ the identity on $\land V$ and $H$ commuting with the projections onto $\land V.$  
\end{definition} 
Let 
$[\land V \otimes \land W,  \land V \otimes \land W']_{\land V}^{\land V} $ denote the set  of equivalence classes of DG maps $\land V \otimes \land W, D \to \land V \otimes \land W', D'$ that are the identity on $\land V$ and commute with the projections,   under the relation of DG homotopy under and over $\land V$.    The following result is essentially a  special case of \cite[Pro.3.4]{FLS}. 
\begin{lemma}  \label{lem:o/u}   Suppose  $X$ and $X'$ are nilpotent as spaces and $B$ is   simply connected. Suppose either (i) $X'$ is a finite complex or (ii) the fibre $F$ for $X$ has only finitely many nonzero rational homotopy groups.  Then
there is a  natural bijection  
of sets $$ [\land V \otimes \land W,  \land V \otimes \land W']_{\land V}^{\land V} \equiv [X'_\Q, X_\Q]^{B_\Q}_{B_\Q}.$$
\end{lemma}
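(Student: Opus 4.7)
The plan is to derive this by enhancing the general fibrewise Sullivan correspondence of \cite[Pro.3.4]{FLS} with the extra section/basepoint data on both sides. That proposition, under a finiteness hypothesis of the type (i) or (ii) listed, already supplies a natural bijection between DG-homotopy-under-$\land V$ classes of DG algebra maps $\land V \otimes \land W \to \land V \otimes \land W'$ that restrict to the identity on $\land V$, and homotopy classes over $B_\Q$ of maps $X'_\Q \to X_\Q$. My task is to cut this bijection down to the subsets distinguished on each side by the additional requirement that the map respect the sections.

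First, I would translate the fibrewise pointed structure into algebra. The section $\sigma : B \to X$ is modeled by the DG algebra projection $P : \land V \otimes \land W \to \land V$ (and analogously $P'$ for $X'$), so a fibrewise pointed map $f : X' \to X$ satisfying $f \circ \sigma' = \sigma$ corresponds to a DG map $\psi : \land V \otimes \land W \to \land V \otimes \land W'$ satisfying $P' \circ \psi = P$, which is precisely the ``commutes with projections'' condition in the statement. The acyclic algebra $\land(t, dt)$ then plays the role of a relative path object under and over $\land V$, so that the DG homotopy relation of Definition \ref{def:homotopic} is the model-level counterpart of fibrewise pointed homotopy after rationalization.

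Second, I would check that the FLS bijection restricts to one between the subsets so defined, and that naturality carries over. The routine half is that an algebra map $\psi$ commuting with projections, viewed as a map over $B_\Q$, realizes to a pointed fibrewise map up to the appropriate homotopy, and similarly for DG homotopies. The delicate half is the reverse: given a fibrewise pointed map $g : X'_\Q \to X_\Q$, FLS first produces a $\psi$ satisfying $P' \circ \psi = P$ only up to DG homotopy over $\land V$, and this must be strictified within its homotopy-under-$\land V$ class into one satisfying $P' \circ \psi = P$ on the nose. The main obstacle is exactly this strictification step; it is handled by exploiting the K-S extension structure of $\land V \to \land V \otimes \land W$, which together with the finiteness hypothesis (i) or (ii) supplies the usual lifting property needed to modify $\psi$, and analogously any DG homotopy between two such $\psi$'s, so as to match the projections strictly. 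Once this adjustment is in place, injectivity and surjectivity, as well as naturality in the input data, follow directly from those of the underlying FLS bijection.
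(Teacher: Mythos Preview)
The paper's proof is considerably more direct than yours: the lemma \emph{is} \cite[Pro.3.4]{FLS} specialized to the case $Z = *$, with the fibrewise pointed (under-and-over) structure already built into that statement; the only additional remark is that hypothesis (ii) also suffices, because the Moore--Postnikov decomposition of $X_\Q \to B_\Q$ over which the FLS induction runs is then finite. You have read the cited proposition as giving only the coarser ``over $B_\Q$'' bijection, and hence propose an additional strictification layer to recover the section data on both sides---work which, on the paper's reading of \cite{FLS}, is simply not needed.

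If one were forced to take your longer route, the sketch is in the right spirit but glosses over a genuine subtlety: you are not merely restricting an existing bijection to subsets, since the equivalence relations on both sides are simultaneously being refined (DG homotopy under \emph{and over} $\land V$ versus merely under $\land V$; fibrewise pointed homotopy versus fibrewise homotopy). Each of well-definedness, surjectivity, and injectivity therefore requires its own strictification argument. You gesture at all three, but the injectivity step---adjusting a DG homotopy under $\land V$ between two maps already commuting with projections into one under \emph{and over} $\land V$, while keeping its endpoints fixed---is the most delicate and is only alluded to in your outline.
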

 \begin{proof}     
The bijective correspondence when $X'$ is finite is \cite[Pro.3.4]{FLS} in the case $Z = *.$ The assumption $X'$ finite is used in the proof  to perform 
an induction over the terms in a Moore-Postnikov decomposition of $X_\Q \to B_\Q$.  If $\pi_*(F) \otimes \Q$ is finite-dimensional this decomposition is finite already and so (ii) is sufficient for $X'$ infinite CW.  Naturality follows from the naturality of passage to models and spatial realization in the ordinary case. \end{proof}

We may  pass from the minimal model to an arbitrary model of $B$ in Lemma \ref{lem:o/u}.
Suppose $(\B, \delta_B)$ is a DG algebra such that there is a surjective quasi-isomorphism $\phi \colon (\land V, d_B) \to (\B, \delta_B)$.  In this case, we  say $(\B, \delta_B)$ is   a {\em model} for $B$. Forming the pushout gives a commutative square:
\begin{equation}  \label{eq:commsq}  
\xymatrix{  (\land V, d_B)  \ar[d]_{\phi}^{\simeq} \ar[rr]^I & & (\land V \otimes \land W, D) \ar[d]^{\phi \otimes 1}_\simeq \ar[r] & (\land W, d_F) \ar[d]^1 \\
(\B , \delta_B) \ar[rr]^{J} & & (\B \otimes \land W, D') \ar[r] & (\land W, d_F) }
\end{equation} 
in which the vertical maps are surjective quasi-isomorphisms (see \cite[p.66]{Ba}).  We refer to $J \colon (\B, \delta_B) \to (\B \otimes \land W, D')$
as a {\em relative model} for $p \colon X \to B.$  
As in Definition \ref{def:homotopic},  we say two DG algebra maps $\psi_1, \psi_2$ between relative models are  {\em DG homotopic under and over  $ (\B, \delta_B)$} if there is a DG algebra map $H \colon \B  \otimes \land W \to \B \otimes \land W' \otimes \land(t, dt)$ satisfying $p_i \circ H = \psi_i$ with $H$ the identity on $\B$ and $H$ commuting with the projections.   Let $$[\B \otimes \land W,  \B \otimes \land W']_{\B}^{\B}$$
denote the set of  equivalence classes, via DG homotopy under and over $\B$,  of DG algebra maps under  and over $\B$.   
We have:
\begin{lemma}  \label{lem:o/u2}    With hypotheses as in  Lemma \ref{lem:o/u},  there
is a  natural bijection  
of sets $$  [\B \otimes \land W,  \B \otimes \land W']_{\B}^{\B} \equiv [X'_\Q, X_\Q]^{B_\Q}_{B_\Q}.$$
\end{lemma}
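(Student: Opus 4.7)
The plan is to reduce \lemref{lem:o/u2} to \lemref{lem:o/u} by establishing a natural bijection
\[
\Phi_*\colon [\land V \otimes \land W, \land V \otimes \land W']_{\land V}^{\land V} \longrightarrow [\B \otimes \land W, \B \otimes \land W']_{\B}^{\B}
\]
induced by the surjective quasi-isomorphism $\phi\colon (\land V, d_B) \to (\B, \delta_B)$. Composing $\Phi_*$ with the bijection of \lemref{lem:o/u} then yields the required natural bijection with $[X'_\Q, X_\Q]^{B_\Q}_{B_\Q}$.

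To construct $\Phi_*$, I would use the pushout square~(\ref{eq:commsq}), which identifies $\B \otimes \land W$ with $\B \otimes_{\land V} (\land V \otimes \land W)$ as DG algebras, and similarly for $\land W'$. Set $\Phi_*[\tilde\psi] = [\mathrm{id}_\B \otimes_{\land V} \tilde\psi]$. Well-definedness on homotopy classes follows because the interval factor $\land(t, dt)$ commutes with $- \otimes_{\land V} \B$, so any DG homotopy under and over $\land V$ pushes forward to one under and over $\B$. Naturality in $X$ and $X'$ is immediate from the construction.

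For the bijectivity of $\Phi_*$, I would perform a Koszul--Sullivan lifting induction along a K-S basis $\{w_i\}$ for $W$. For surjectivity, given $\psi\colon \B \otimes \land W \to \B \otimes \land W'$ under and over $\B$, construct $\tilde\psi$ inductively by choosing $\tilde\psi(w_i) \in \land V \otimes \land W'$ to lift $\psi(w_i)$ along the surjective quasi-isomorphism $\phi \otimes 1\colon \land V \otimes \land W' \to \B \otimes \land W'$ while satisfying $D(\tilde\psi(w_i)) = \tilde\psi(D(w_i))$. For injectivity, I apply the same lifting technique to a DG homotopy $H\colon \B \otimes \land W \to \B \otimes \land W' \otimes \land(t, dt)$, producing a DG homotopy under and over $\land V$. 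This is essentially the argument used in \cite[Pro.3.4]{FLS} to establish \lemref{lem:o/u}, now carried out with the arbitrary model $\B$ in place of the minimal model $\land V$.

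The main obstacle will be to arrange each lift $\tilde\psi(w_i)$ to satisfy the ``over $\land V$'' constraint, that is, to lie in the ideal $(W') \subset \land V \otimes \land W'$ generated by $W'$, so that $P' \circ \tilde\psi = P$. Since the section-induced projections $P$ and $P'$ are DG algebra maps, both $(W') \subset \land V \otimes \land W'$ and $(W') \subset \B \otimes \land W'$ are stable under the respective differentials, and the five lemma applied to the short exact sequence $0 \to (W') \to \land V \otimes \land W' \to \land V \to 0$ and its analogue for $\B$ shows that $\phi \otimes 1$ restricts to a surjective quasi-isomorphism between these subcomplexes. Since $\tilde\psi(D(w_i))$ already lies in $(W')$ by the inductive hypothesis and $\psi(w_i)$ lies in $(W') \subset \B \otimes \land W'$ by the ``over $\B$'' hypothesis on $\psi$, a suitable lift can be chosen inside $(W')$ at each stage, completing the induction.
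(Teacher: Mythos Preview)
Your proposal is correct and follows essentially the same approach as the paper: both reduce to \lemref{lem:o/u} by constructing a natural bijection between the $\land V$-relative and $\B$-relative homotopy sets, using the pushout to go one way and Koszul--Sullivan lifting (the paper invokes \cite[Pro.14.6]{FHT} directly) to go the other. Your explicit treatment of the ``over $\land V$'' constraint via the five lemma on the ideal $(W')$ is more detailed than the paper, which simply defers this point to the standard argument in \cite[Sec.14]{FHT} and \cite[Sec.3]{FLS}.
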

\begin{proof} 
It suffices to produce a  natural bijection
$$[\B \otimes \land W,  \B \otimes \land W']_{\B}^{\B} \equiv [\land V \otimes \land W,  \land V \otimes \land W']_{\land V}^{\land V} $$ 
Consider the  diagram with vertical quasi-isomorphisms as in the middle column of (\ref{eq:commsq}). 
$$\xymatrix{ \land V \otimes \land W \ar[d]_{\phi \otimes 1}^{\simeq} \ar@{.>}[rr]^{\psi} && \land V \otimes \land W' \ar[d]^{\phi \otimes 1}_{\simeq} \\
\B \otimes \land W \ar@{.>}[rr]^{\overline{\psi}} &&  \B \otimes \land W'}
$$
A DG algebra map $\psi \colon \land V \otimes \land W \to \land V \otimes \land W'$ under and over   $\land V$ induces a DG map 
   $\overline{\psi} \colon \B \otimes \land W \to \B \otimes \land W'$ under and over $\B$ given by $\overline{\psi}(b) = b$ and $\overline{\psi}(w) = (\phi \otimes 1) \circ \psi(w)$.  Conversely, suppose given a DG algebra map  $\overline{\psi} \colon \B \otimes \land W \to \B \otimes \land W'$ under and over $\B$.  Apply  the relative lifting lemma \cite[Pro.14.6]{FHT} to  $\overline{\psi}\circ (\phi \otimes 1)$ to obtain  $\psi$, a DG algebra map    under and over $\land V$.   The proof that these assignments set up the needed bijection on homotopy sets  follows the standard argument (see \cite[Sec.14]{FHT} or \cite[Sec.3]{FLS}).  
\end{proof}

We now fix a pointed fibrewise space $X$ over a simply connected base $B$ and a  relative model written $(\B, d_B) \to (\B \otimes \land W, D)$. Write $F$ for the fibre so that $(\land W, d_F)$ is a minimal model for $F$.   A simple criterion for fibrewise rational triviality is   the following:

\begin{theorem} \label{homtrivial}   Let $X$ be a pointed fibrewise space  with 
 $X$ nilpotent and $B$ simply connected. Suppose either (i) $X$ is homotopy finite or (ii) $F$ has finitely many non-trivial rational homotopy groups. Suppose $D(w) =  d_F(w)$ for all $w \in W$ in the relative model for $X$.    Then $X$ is rationally fibrewise trivial. 
\end{theorem}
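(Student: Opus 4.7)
The plan is to show that, under the hypothesis $D(w) = d_F(w)$ for all $w \in W$, the relative model of $X$ coincides with that of the trivial fibration $B \times F \to B$, and then use \lemref{lem:o/u2} to realize this algebraic coincidence geometrically. Since $D$ is a derivation extending $d_B$ on $\B$ and acts on $W$ as $d_F$, it must equal the tensor-product differential $d_B \otimes 1 + 1 \otimes d_F$ on all of $\B \otimes \land W$. But this is precisely the relative model of the trivial fibration $p_1 \colon B \times F \to B$ with section $b \mapsto (b, *)$. Hence $X$ and $B \times F$ share the same relative model as DG algebras under and over $\B$.

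I would then apply \lemref{lem:o/u2} with $X$ and $X' := B \times F$, arranging the roles so that the lemma's finiteness hypothesis is met: under hypothesis (ii), the fibre $F$ is the fibre of both spaces, so either assignment works; under hypothesis (i), the homotopy-finite $X$ should play the role of ``$X'$'' in the lemma, with $B \times F$ in the role of ``$X$''. Either way, the lemma provides a natural bijection between DG-homotopy classes of DG algebra maps under and over $\B$ and fibrewise pointed homotopy classes of maps of rationalizations over $B_\Q$. The identity map on $\B \otimes \land W$ is such a DG algebra map, so it corresponds to a fibrewise pointed homotopy class of maps $f$ between $X_\Q$ and $(B \times F)_\Q = B_\Q \times F_\Q$ over $B_\Q$.

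Since the associated DG map is the identity --- in particular a quasi-isomorphism of the fibre models $(\land W, d_F) \to (\land W, d_F)$ --- $f$ restricts to a rational homotopy equivalence on fibres and is the identity on $B_\Q$, so $f$ is a fibrewise weak equivalence and hence (everything being CW) a fibrewise pointed homotopy equivalence by the results from \cite{CJ} cited earlier in the section. This shows $X_\Q$ is fibrewise trivial, and then by the implication ``$X_\Q$ fibrewise trivial $\Rightarrow X_\rat$ fibrewise trivial'' established earlier in this section, $X$ is rationally fibrewise trivial. The only step that requires real care is the bookkeeping in \lemref{lem:o/u2} --- matching the theorem's two alternative finiteness hypotheses against the lemma's hypothesis on the appropriate factor --- since the rest of the argument is essentially formal once the algebraic identification of the relative models is in hand.
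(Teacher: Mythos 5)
Your proposal is correct and follows essentially the same route as the paper: the hypothesis forces $D$ to be the tensor-product differential $d_B\otimes 1 + 1\otimes d_F$, so the relative model is (isomorphic under and over $\B$ to) that of the trivial fibration, and Lemma~\ref{lem:o/u2} realizes this identification as the fibrewise pointed equivalence $X_\Q \simeq^{B_\Q}_{B_\Q} B_\Q\times F_\Q$, whence $X_\rat$ is fibrewise trivial. Your extra care about which space plays the role of $X'$ in the lemma and about the passage from $X_\Q$ to $X_\rat$ only makes explicit what the paper leaves implicit.
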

\begin{proof}
The hypothesis on $D$ implies the inclusion $j \colon (\land W, d_F) \to (\B \otimes \land W, D)$ is a DG algebra map.    Thus  $$1 \otimes j   \colon (\B, d_B) \otimes (\land W, d_F) \to  
(\B \otimes \land W, D)$$ is a DG equivalence under and over  $\B.$  The spatial realization of $1\otimes j$  provided by Lemma \ref{lem:o/u2}
is the needed fibrewise   equivalence $  X_\Q  \simeq^{B_\Q}_{B_\Q} B_\Q \times F_\Q$.    
\end{proof}

Now suppose $X$ is a fibrewise H-space with fibrewise multiplication $m$ and  fibre $G$. We describe the Sullivan model for   $m$.   A relative model for the fibre product  $X \times_B X$ is given by $$\B \otimes \land W \otimes \land W  \cong   (\B \otimes \land W) \otimes_\B (\B \otimes \land W)$$ with $d_G(w) = d_G(w') = 0$ for $ w \in W$ (cf. \cite[Pro.15.8]{FHT}). Here we write $w = 1 \otimes w \otimes 1$ and $w' = 1 \otimes 1 \otimes w$ .  The model for $m$ is 
 then  a map $$C \colon \B \otimes \land W \to \B \otimes \land W \otimes \land W $$ with  $C$ the identity on $\B$ and $C$ commuting with the projections.  Since $m \circ (1 \times \sigma) \circ \Delta$ and $m \circ (\sigma \times 1) \circ \Delta$ are pointed fibrewise homotopic to the identity we obtain that 
$C(w) - w - w' \subset 
\B \otimes  \land^+ W \otimes \land^{+} W.$ 
 Thus we may write
$$C(w) = C_0(w) + C_2(w) + C_3(w) + \cdots$$
for $w \in W$  where $C_0(w) = w + w'$ and $C_r(w) \in \B \otimes
\sum_{s=1}^{s=r-1}( \land^s W \otimes \land^{r-s} W)$ for $r \geq 2,$ 
as in the introduction.  Here $P' \circ C_r(w) = 0$ where $P' \colon \B \otimes \land W \otimes \land W\to \B$ is the projection. 
We  refer to a map $C$ satisfying these conditions   as a {\em fibrewise multiplication of models}.

There is a natural notion of equivalence, or isomorphism, of
fibrewise multiplications on a fibrewise H-space.  Suppose $m_1
\colon X \times_B X \to X$ and $m_2
\colon Y \times_B Y \to Y$ are two such.  We say that $m_1$ and
$m_2$ are \emph{fibrewise equivalent} fibrewise multiplications if
there is a fibrewise pointed, fibrewise homotopy equivalence $f \colon X \to Y$
for which the diagram
$$\xymatrix{X \times_B X \ar[d]_{f\times_B f} \ar[r]^-{m_1} & X \ar[d]^{f}\\
Y \times_B Y \ar[r]_-{m_2}& Y }$$
is pointed fibrewise homotopy commutative.

This translates  to give a corresponding notion of
equivalence between  fibrewise multiplications of models.  We phrase this in the form in which we use it below.

 \begin{definition}\label{def: equivalence of model multn}
Say that two fibrewise multiplications of models
$$C_i\colon \big(\B \otimes \land W, D_i\big) \to \big(\B \otimes \land W \otimes \land
W, D_i\big),$$
for $i = 1,2$,  are \emph{equivalent} if there exists a DG algebra isomorphism $\phi \colon
\big(\B \otimes \land W, D_2\big) \to \big(\B \otimes \land W, D_1\big)$ that is the
identity on $\B$ and commutes with the projections, such that
\begin{equation}
\label{eq:C}
\xymatrix{\B \otimes \land W \ar[d]_{\phi} \ar[r]^-{C_2}&\B \otimes \land W \otimes \land
W \ar[d]^{1 \otimes \phi\otimes\phi}  \\
\B \otimes \land W \ar[r]_-{C_1} &\B \otimes \land W
\otimes \land W }
\end{equation}
commutes  up to DG homotopy under and over  $\B$.  We will denote this equivalence by $(D_1, C_1) \equiv_\B^\B (D_2, C_2)$ or, in case the differential is the same on either model, simply by $C_1 \equiv_\B^\B C_2$.
\end{definition}

Our criterion  for rational  fibrewise H-triviality is the following:

\begin{theorem}  \label{eqtrivial}  Let $X$ be a fibrewise H-space with X a nilpotent space, $B$ simply connected and either $X$ or the fibre $G$   homotopy finite.   Suppose that   $(D, C) \equiv_\B^\B (D=d_B\otimes 1, C_0)$.    Then $X$ is rationally fibrewise H-trivial.
\end{theorem}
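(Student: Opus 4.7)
The plan is to realize the hypothesized algebraic equivalence spatially as a fibrewise H-equivalence of $X_\Q$, then transfer to $X_\rat$ via the pull-back description of fibrewise rationalization recalled at the start of \secref{sec:prelim}. \lemref{lem:o/u2} serves as the bridge from algebra to topology throughout.

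Unpacking the hypothesis, we have a DG algebra isomorphism $\phi \colon (\B \otimes \land W, d_B \otimes 1) \to (\B \otimes \land W, D)$ that is the identity on $\B$, commutes with the projections, and satisfies $C \circ \phi \sim (1 \otimes \phi \otimes \phi) \circ C_0$ as DG maps under and over $\B$. Reducing modulo $\B^+$, the fibre restriction $\phi_G \colon (\land W, 0) \to (\land W, d_F)$ is itself a DG isomorphism, forcing $d_F = 0$ and identifying $G_\Q$ with a product of rational Eilenberg--Mac\,Lane spaces. Either Hopf's theorem (when $G$ is homotopy finite) or the tensor decomposition $H^*(X; \Q) \cong H^*(B; \Q) \otimes \land W$ read off from the trivial model (when $X$ is homotopy finite) forces $\dim W < \infty$, so hypothesis (ii) of \lemref{lem:o/u2} is available whenever we need it.

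Applying \lemref{lem:o/u2} first to $\phi$ realizes it as a fibrewise pointed equivalence $f \colon X_\Q \to B_\Q \times G_\Q$, with $B_\Q \times G_\Q$ the spatial realization of the trivial model. Translating the homotopy-commutative square of Definition \ref{def: equivalence of model multn} through \lemref{lem:o/u2} a second time---now to DG maps from the trivial model into the model of $X \times_B X$---yields the fibrewise pointed homotopy
\[ f \circ m_\Q \sim_B^B \mu^{\mathrm{std}} \circ (f \times_B f), \]
where $\mu^{\mathrm{std}} = 1_{B_\Q} \times m_G^{\mathrm{std}}$ is the product multiplication on $B_\Q \times G_\Q$ realizing $C_0$, with $m_G^{\mathrm{std}}$ the standard H-structure on $G_\Q$ corresponding to $c_0(w) = w + w'$.

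The main obstacle is that the definition of rational fibrewise H-triviality requires the rationalized original multiplication $m_{G, \Q}$ on the target, rather than the standard $m_G^{\mathrm{std}}$ appearing naturally here. Restricting the hypothesis square to the fibre shows $\phi_G$ intertwines $c_0$ with the fibre model of $C$, so its spatial realization $\bar\phi_G$ is an H-equivalence $(G_\Q, m_{G, \Q}) \to (G_\Q, m_G^{\mathrm{std}})$. Replacing $f$ by $f' := (1 \times \bar\phi_G^{-1}) \circ f$ converts the previous displayed equation into $f' \circ m_\Q \sim_B^B (1 \times m_{G, \Q}) \circ (f' \times_B f')$, exhibiting $(X_\Q, m_\Q)$ as fibrewise H-trivial over $B_\Q$. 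The pull-back description from the start of \secref{sec:prelim} then transfers this fibrewise H-equivalence along $\ell_B \colon B \to B_\Q$ to the desired rational fibrewise H-triviality of $(X, m)$.
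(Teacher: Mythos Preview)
Your proof is correct and follows essentially the same approach as the paper: realize the algebraic equivalence spatially via \lemref{lem:o/u2}, then pull back along $\ell_B$.  You are more careful than the paper in two places.  First, you explicitly verify that hypothesis~(ii) of \lemref{lem:o/u2} holds by arguing $\dim W < \infty$ (the paper simply invokes the lemma without comment).  Second, you notice that the spatial realization of $C_0$ equips the target $B_\Q \times G_\Q$ with the \emph{standard} multiplication $m_G^{\mathrm{std}}$ rather than the rationalized restricted multiplication $m_{G,\Q}$ demanded by the definition of fibrewise H-triviality, and you repair this by composing with $1 \times \bar\phi_G^{-1}$.  The paper's proof stops at the H-equivalence $X_\Q \simeq^{B_\Q}_{B_\Q} (X_0, m_0)$ with $m_0 = 1 \times (m_0)_{G_\Q}$ and does not make this last adjustment explicit; your extra paragraph is a genuine (if routine) completion of that argument.
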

\begin{proof}
 Let $  m_0 \colon X_{0} \times_{B_\Q} X_0 \to X_0$ denote the  spatial realization of the DG algebra map $C_0 \colon \B \otimes \land W \to \B \otimes \land W \otimes \land W$ as given by Lemma \ref{lem:o/u2}.  Then $X_0 \simeq^{B_\Q}_{B_\Q} B_\Q \times G_\Q$.    By naturality, $m_0 = 1  \times_{B_\Q}  (m_0) _{ G_\Q}$ where $(m_0)_{G_\Q}$ is the restriction of $m_0$ to $G_\Q$.     Applying Lemma \ref{lem:o/u2} again,   the spatial realization of the diagram (\ref{eq:C}) for $C_1 = C_0$ and $C_2 = C$ yields  the needed  $H$-equivalence $X_\Q, m_\Q \simeq^{B_\Q}_{B_\Q} X_0, m_0.$    \end{proof}

\section{Fibrewise Hopf and Leray-Samelson Theorems}\label{sec:LS}

Hopf's theorem, as enunciated at the start of this paper, may be viewed as the statement that the differential vanishes in the Sullivan minimal model of an  $H$-space.  This  result  may be proved directly in the framework of  Sullivan models (see \cite[Ex.3, p.143]{FHT}).  Our first main result in this section generalizes this form of Hopf's theorem to fibrewise H-spaces that satisfy a certain technical condition.  

We first introduce some  notational conventions for describing elements of $\B \otimes \land W \otimes \land W$. 
Suppose $\{w_i\}$ is a basis of $W$.   For $r \geq 2$, let  $I = (i_1, i_2, \dots , i_r)$ be an $r$-tuple of indices, in which $i_1  \leq i_2 \leq \cdots \leq i_r$.  We denote the length $r$  of an $r$-tuple $I$ by $|I|$.  We will write $w_I$ for
$w_{i_1} w_{i_2} \cdots w_{i_r} \in \land W$.  Then let $\varepsilon =
(\epsilon_1, \epsilon_2, \dots , \epsilon_r)$ be a binary $r$-tuple with each $\epsilon_i
= 0$ or $1$, and such that $1 \leq \sum_{i=1}^{r} \epsilon_i \leq r-1$ (which excludes the $r$-tuples $(0, 0, \dots, 0)$ and $(1, 1,\dots, 1)$).   We will write $w^{\epsilon}_I$ for
$w_{i_1}^{\epsilon_1} w_{i_2}^{\epsilon_2} \cdots w_{i_r}^{\epsilon_r} \in \land W \otimes \land W$, with each $w_{i}^0 = w_i \in
\land W \otimes1$ and $w_{i}^1 = w_i' \in
1\otimes\land W $.  We allow for repeated indices in $I$ if even degree generators in $W$ are present.  Also, we write $w_I$ for $w^{(0, 0, \dots, 0)}_I = w_{i_1}w_{i_2}\cdots
w_{i_r} \in \land W \otimes1$ and $w'_I$ for $w^{(1, 1, \dots, 1)}_I =
w'_{i_1}w'_{i_2}\cdots w'_{i_r}\in
1\otimes\land W$.  Then we will write the typical element of $\B \otimes \land^{+} W \otimes \land^{+} W$ whose terms involve $r$-fold products of exactly those $w_i$'s whose indices appear in $I$ as
$$\chi_I = \sum_{\varepsilon} b^{\varepsilon}_I w^{\varepsilon}_I \in \B \otimes \land^{+} W \otimes \land^{+} W, $$
with each $b^{\epsilon}_I \in \B$ and where the sum is over all binary $r$-tuples $\varepsilon =
(\epsilon_1, \epsilon_2, \dots , \epsilon_r)$ with each $\epsilon_i
= 0$ or $1$, excluding the $r$-tuples $(0, 0, \dots, 0)$ and $(1, 1,
\dots, 1)$.   

Finally,  we write $S_i$ for the
binomial $w_i + w'_i$ and extend this to $$S_I = S_{i_1}S_{i_2}\cdots
S_{i_r} = (w_{i_1}+w'_{i_1})\cdots (w_{i_r} + w'_{i_r}).$$  These
notations are used throughout the remainder of this section.

\begin{theorem} \label{D=0}
    Let  $(\B, d_B) \to (\B \otimes \land W, D)$ be a relative model     with the projection $\B \otimes \land W \to \B$  a DG algebra map admitting  
   $C \colon \B \otimes \land W \to \B \otimes \land W \otimes \land W$   a fibrewise multiplication of models.  Suppose  $W$ and $\B$ are of finite type and   that we have  $D(W) \subseteq \B \otimes
\land^{\geq 2} W$.  Then there is an equivalence of fibrewise multiplications of models $(D,C) \equiv^\B_\B (d_B\otimes 1, C')$ for some fibrewise multiplication  of models  $C'$.    \end{theorem}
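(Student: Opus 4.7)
The plan is to construct a DG algebra isomorphism $\phi\colon (\B\otimes\land W,\, d_B\otimes 1) \to (\B\otimes\land W,\, D)$ that is the identity on $\B$ and satisfies $P\circ \phi = P$, and then to transport $C$ through $\phi$: the map $C' := (1\otimes\phi^{-1}\otimes\phi^{-1})\circ C\circ\phi$ is automatically a fibrewise multiplication of models (it is the identity on $\B$, commutes with projections, and satisfies $C'(w) - w - w' \in \B\otimes\land^+ W\otimes\land^+ W$, because $\phi$ preserves the ideal $\B\otimes\land^{\geq 1}W$), and the square in Definition~\ref{def: equivalence of model multn} commutes on the nose, witnessing $(D,C)\equiv_\B^\B (d_B\otimes 1, C')$.

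I build $\phi$ by induction along a K-S basis $\{w_i\}_{i\in\I}$ of $W$, ordered so that $D(w_i)\in\B\otimes\land W(i)$; by hypothesis this puts $D(w_i)\in\B\otimes\land^{\geq 2}W(i)$. At stage $i$ I produce $\tilde w_i = w_i + u_i$ with $u_i\in\B\otimes\land^{\geq 2}W(i)$ and $D(\tilde w_i)=0$, and set $\phi|_\B=\mathrm{id}$, $\phi(w_i)=\tilde w_i$. The triangular form of the correction $u_i$ makes $\phi$ a graded algebra automorphism with $P\circ\phi=P$, while $D(\tilde w_i)=0=\phi\bigl((d_B\otimes 1)(w_i)\bigr)$ forces $\phi$ to be a chain map on generators, hence everywhere.

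The inductive step is the Hopf-style argument. Assume $D(\tilde w_j)=0$ for all $j<i$; replacing each $w_j$ by $\tilde w_j$, we may take $D$ to vanish on $W(i)$, so $D$ acts on $\B\otimes\land W(i)$ only through $d_B$ on coefficients, and the task is to realise the $D$-cycle $D(w_i)\in\B\otimes\land^{\geq 2}W(i)$ as a $D$-boundary of something in the same subspace. Write $C(w_i) = w_i+w_i'+\chi_i$ with $\chi_i\in\B\otimes\land^+ W\otimes\land^+ W$, apply the chain-map identity $C\circ D = D\circ C$ at $w_i$, and postcompose with the DG algebra multiplication $\mu\colon \B\otimes\land W\otimes\land W\to\B\otimes\land W$ (which is the identity on $\B$ and sends both $w_j$ and $w_j'$ to $w_j$). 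Unwinding length by length in the coalgebra and collecting the contributions of the mixed binary tuples $\varepsilon$ that index the $\chi_i$-terms yields, for each $r\geq 2$, a relation
\[
\lambda_r\, D(w_i)^{(r)} \;=\; D\bigl(\mu(\chi_i^{(r)})\bigr)
\]
in $\B\otimes\land^r W(i)$, with $\lambda_r\in\Q^\times$ an explicit combinatorial factor (of the shape $2^r - 2$ in the leading case, counting mixed $\varepsilon$). Setting $u_i := -\sum_r \lambda_r^{-1}\mu(\chi_i^{(r)}) \in \B\otimes\land^{\geq 2}W(i)$ then gives $D(\tilde w_i)=0$.

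The main obstacle is precisely this Hopf step: the clean relation above holds only modulo contributions of the non-primitive parts $\chi_j$, $j<i$, that enter $C(w_I)$ when $C$ is expanded multiplicatively on the length-$r$ piece of $D(w_i)$. Making the inductive step rigorous requires either strengthening the induction so that each $w_j$ is simultaneously normalised on the multiplicative side (absorbing $\chi_j$ as $j$ advances), or tracking the $\chi_j$-contributions as lower-order corrections, and in either case verifying that the combinatorial coefficient $\lambda_r$ is still nonzero after these adjustments. The hypothesis $D(W)\subseteq\B\otimes\land^{\geq 2}W$ is exactly what excludes linear-in-$W$ terms from the analysis and localises the argument to length $\geq 2$, while the finite-type hypotheses on $\B$ and $W$ guarantee that the inductive construction is well defined degree by degree.
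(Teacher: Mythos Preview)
Your overall architecture---induct along a K--S basis, and at each $w_k$ use the chain-map identity $CD = DC$ to exhibit $D(w_k)$ as a $D$-boundary of something in $\B\otimes\land^{\geq 2}W(k)$, then absorb that correction into $w_k$---is exactly the paper's.  Your device of postcomposing with the multiplication $\mu$ is a clean variant of the paper's direct coefficient comparison, and it does deliver the relation $(2^r-2)\,D(w_k)^{(r)} = D\bigl(\mu(\chi_k^{(r)})\bigr)$ \emph{at the lowest nonvanishing word-length $r$} of $D(w_k)$.

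The gap is your claim that this relation holds for every $r\ge 2$ simultaneously, so that the single correction $u_k = -\sum_r (2^r-2)^{-1}\mu(\chi_k^{(r)})$ works in one shot.  It does not.  Expanding $\mu\bigl(C(D(w_k))\bigr) = \sum_I b_I \prod_j\bigl(2w_{i_j} + \mu(\chi_{i_j})\bigr)$, every choice of a nontrivial $\mu(\chi_{i_j})$ factor raises word-length by at least one; so at a length $r' > r_{\min}$ the left side acquires cross-terms coming from the length-$r$ pieces of $D(w_k)$ with $r < r'$.  These cross-terms are $D$-cycles (each $b_I$ is a $d_B$-cycle by $D^2=0$) but there is no reason for them to be $D$-boundaries, and they spoil the displayed identity at length $r'$.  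Your own diagnosis correctly locates the obstruction in the $\chi_j$ for $j<k$, but neither proposed remedy is available: normalising all $\chi_j$ to zero is essentially the Leray--Samelson step and needs associativity, while ``tracking as lower-order corrections'' is not made precise and, as just observed, those corrections are not visibly exact.

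The fix---and this is what the paper does---is a second, inner induction on the word-length $r$.  At each stage one has arranged $D(w_k)\in\B\otimes\land^{\geq r}W$; then your $\mu$-argument (or, in the paper, comparison of the coefficient of $w_I^{(1,0,\dots,0)}$ in $CD(w_k)=DC(w_k)$) shows the length-$r$ piece is $D$-exact, one performs the change of generators $w_k\mapsto w_k - (\text{primitive})$, \emph{transports $C$ along this isomorphism as well}, and obtains a new pair $(D,C)$ with $D(w_k)\in\B\otimes\land^{\geq r+1}W$.  The cross-terms never appear, because at each step the lower-length pieces of $D(w_k)$ have already been removed.  Your single $u_k$ is thus replaced by a finite composition of triangular automorphisms, one per word-length; the finite-type hypothesis guarantees this terminates.
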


\begin{proof}
Suppose $\{w_i\}$ is a K-S basis of $W$, ordered so as to refine the
partial ordering by degree.  That is, we suppose that $|w_i| < |w_j|$ implies $i < j$---such a choice is always possible.  We will proceed by induction over the index set, and show an equivalence $(D, C)\equiv^\B_\B (D^p, C^p)$, for each $p \geq 1$, with $D^p(w_i) = 0$ for all $i \leq p$.     

Induction starts with $D^1 = D$ and $C^1 = C$, since we have $D(w_1) = 0$ from our basic assumptions.

Now suppose inductively that, for some $k \geq 2$, we have $(D, C)\equiv^\B_\B (D^{k-1}, C^{k-1})$, with $D^{k-1}(w_i) = 0$ for all $i <k$.      By hypothesis,  $D^{k-1}(w_k)$ may be written as
$$D^{k-1}(w_k) = D^{k-1}_r(w_k) + D^{k-1}_{r+1}(w_k) + \cdots,$$
for some $r\geq2$ and each  $D^{k-1}_j(w_k) \in \mathcal{B}\otimes \land^j W$.        
Using the notation introduced above, we may write
\begin{equation}\label{eq: Dr}
D^{k-1}_{r}(w_k) = \sum_{|I| = r}\, b_I w_I = \sum_I\, b_I w_{i_1}w_{i_2}\cdots
w_{i_{r}},
\end{equation}
where the sum is over all $r$-tuples $I = (i_1, i_2, \dots, i_{r})$ with $i_1  \leq i_2 \leq \cdots \leq i_r$,
and for which there are corresponding elements $b_I \in \mathcal{B}$ for which $|b_I w_I| = |w_k| + 1$.  

Recall that $C^{k-1} \colon \B \otimes \land W \to \B \otimes \land W \otimes \land W$ denotes the fibrewise multiplication of models that we have in hand.  Modulo terms in $\B \otimes \land^{+} W \otimes \land^{+} W$,  we have $C^{k-1}(w_i) \equiv w_i + w_i'$ for
each $w_i \in W$.  Therefore, modulo terms in $\sum_{i+j\geq r+1}\B \otimes \land^i W \otimes \land^j W$, we have
\begin{equation}\label{eq: CD}
C^{k-1} D^{k-1} (w_k) \equiv C_0 D^{k-1}_{r}(w_k) = \sum_I\, b_I (w_{i_1}+ w'_{i_1})(w_{i_2}+w'_{i_2})\cdots
(w_{i_{r}}+ w'_{i_{r}}).
\end{equation}
Next, we may write
$$C^{k-1}(w_k) = w_k + w'_k + \sum_{|J| \geq 2, \varepsilon} \beta^{\varepsilon}_{J} w^{\varepsilon}_{J}$$
where, as in the notation introduced above, for each $s\geq2$, the sum is over all $s$-tuples $J = (j_1, \cdots, j_s)$ with $j_1 \leq \cdots \leq j_s$, and binary $s$-tuples $\varepsilon = (\epsilon_1, \cdots, \epsilon_s)$ such that $1 \leq \sum_{i=1}^{r} \epsilon_i \leq r-1$, for which there are corresponding elements $\beta^{\varepsilon}_J \in \B$ such that $|\beta^{\varepsilon}_Jw^{\varepsilon}_J| = |w_k|$.    For degree reasons, the only indices $j_i$ that may occur in the $s$-tuples in this sum satisfy $j_i < k$.  By our induction hypothesis, we have $D^{k-1}(w_{j_i}) = 0$ for all such terms and thus, by applying $D^{k-1}$ to the above displayed equation, and working modulo terms in 
$\sum_{i+j\geq r+1}\B \otimes \land^i W \otimes \land^j W$, we have
\begin{equation}\label{eq: DC}
D^{k-1} C^{k-1} (w_k) \equiv D^{k-1}_rw_k + D^{k-1}_rw'_k +  \sum_{2 \leq |J| \leq r, \varepsilon} d_B(\beta^{\varepsilon}_{J}) w^{\varepsilon}_{J}.
\end{equation}
In this last expression, we have abused notation somewhat, and written $D^{k-1}$ (hence $D^{k-1}_r$) for the differential $D^{k-1}\otimes_\B 1 + 1 \otimes_\B D^{k-1}$ in $\B \otimes \land W \otimes \land W$.

Now, using the fact that $C^{k-1}$ is a DG map, equate terms that occur in $(\ref{eq: CD})$ and $(\ref{eq: DC})$ (see (\ref{eq: Dr})) from $\sum_{i+j = r}\B \otimes \land^i W \otimes \land^j W$ , and obtain the following identity 
\begin{equation}\label{eq: ID}
\sum_{|I|=r}\, b_I (w_{i_1}+ w'_{i_1})\cdots
(w_{i_{r}}+ w'_{i_{r}}) = \sum_{|I|=r}\, b_I w_I + \sum_{|I|=r}\, b_I w'_I +  \sum_{|J| = r} d_B(\beta^{\varepsilon}_{J}) w^{\varepsilon}_{J}.
\end{equation}
From this identity, if we equate, for example, coefficients from either side in the element
$w_I^{(1, 0, \cdots, 0)}$, for each $r$-tuple $I$, then we obtain that each $b_I$ is a boundary,
$$b_I = d_B \eta_I,$$
for some $\eta_I \in \B$, for each $I$.  There are two possibilities for the $\eta_I$.  If the index $i_1$ is not repeated in $I$, so that $i_1<i_2$, then we have unique contributions in $w_I^{(1, 0, \cdots, 0)}$ on either side of (\ref{eq: ID}), so $b_I = d_B( \beta_I^{(1, 0, \cdots, 0)})$ and we set $\eta_I = \beta_I^{(1, 0, \cdots, 0)}$.   If $|w_1|$ is even, and $i_1$ is repeated $N$-times, say, in $I$ with $N \leq r$, then $I = (i_1, \cdots, i_1, i_{N+1}, \cdots, i_r)$ with $i_1 < i_{N+1}$.  Here, the left-hand side of  (\ref{eq: ID}) contributes $N b_I$ in $w_I^{(1, 0, \cdots, 0)}$, whereas the right-hand side contributes a sum $\sum_{\varepsilon} d_B(\beta^\varepsilon_I)$, with the sum over those $\varepsilon$ with a single $1$ in one of the first $N$ places, and zeros elsewhere.  Then here we set
$$\eta_I = \frac{1}{N} \sum_{\varepsilon} \beta^\varepsilon_I.$$
Now using once more our induction hypothesis that $D^{k-1}(w_{i_j}) = 0$ for each basis element $w_{i_j}$ that appears here, we may write
$$D^{k-1}_r(w_k) = \sum_{|I| = r} d_B(\eta_I) w_I = D^{k-1}\big( \sum_{|I| = r} \eta_I w_I\big).$$
Now make a change of generators, by which we mean the following.  Define a map
$$\Phi \colon \B \otimes \land W \to \B \otimes \land W$$
by setting $\Phi = \mathrm{id}$ on $\B$ and on all basis elements $w_i$ of $W$ other than $w_k$.  On $w_k$, define
$$\Phi(w_k) = w_k -  \sum_{|I| = r} \eta_I w_I.$$
It is obvious that $\Phi$ defines an isomorphism.  Then define a new differential by setting
$$D^{k-1, r+1} = \Phi^{-1} \circ D^{k-1} \circ \Phi$$
on $\B \otimes \land W$, so that $\Phi$ becomes a DG isomorphism
$$\Phi \colon \big(\B \otimes \land W, D^{k-1, r+1}\big) \to \big(\B \otimes \land W, D^{k-1}\big).$$
This DG isomorphism is evidently both under and over $\B$.  Furthermore, we also define a new fibrewise multiplication of models $C^{k-1, r+1}$, by setting 
$$C^{k-1, r+1} = (\Phi^{-1}\otimes_\B \Phi^{-1})\circ C^{k-1} \circ \Phi.$$
One readily checks that $C^{k-1, r+1}$ commutes with differentials, and so  is indeed a fibrewise multiplication.  Thus  $\Phi$ is an equivalence 
$$(D^{k-1}, C^{k-1}) \equiv^\B_\B (D^{k-1, r+1}, C^{k-1, r+1}).$$
But from our construction, we have that $D^{k-1, r+1} = D^{k-1} = 0$ on basis elements $w_i$ with $i <k$, and that  $D^{k-1, r+1}(w_k) \in \B \otimes \land^{\geq r+1} W$.
By induction, we have an equivalence $(D^{k-1}, C^{k-1}) \equiv^\B_\B (D^{k}, C^{k})$, with $D^k(w_i) = 0$ for $i \leq k$.

For finite-dimensional $W$, the theorem follows immediately by induction over the (finitely many) generators.  In case $W$ is infinite-dimensional, note that in the above induction step, in which we built an equivalence $(D^{k-1}, C^{k-1}) \equiv^\B_\B (D^{k}, C^{k})$, the  change of generators isomorphism is the identity on generators $w_i$ with $i < k$.  Since our equivalences, hence the differentials and the fibrewise multiplications of models,  ``stabilize" in this way, we obtain the same conclusion even when $W$ is infinite-dimensional. \end{proof}

As a consequence we obtain the following result which represents a fibrewise version of Hopf's theorem.  

\begin{theorem}\label{Hopf}
Let  $(\B, d_B) \to (\B \otimes \land W, D)$ be a relative model  with projection a DG algebra map admitting a fibrewise multiplication $C.$  Suppose   $W$ and $\B$ are of finite type satisfiying (1)  $\B$ has zero odd-dimensional cohomology and (2) $W$ is  zero  in even dimensions.   Then there is an equivalence of fibrewise multiplications of models $(D,C) \equiv^\B_\B (d_B\otimes 1, C')$ for some  $C'$. 
\end{theorem}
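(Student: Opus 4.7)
The plan is to reduce \thmref{Hopf} to the just-established \thmref{D=0}: it suffices to produce an equivalence of fibrewise multiplications of models $(D,C) \equiv^\B_\B (D', C')$ with $D'(W) \subseteq \B \otimes \land^{\geq 2} W$, after which \thmref{D=0} delivers the desired equivalence to $(d_B \otimes 1, C'')$.  The task is therefore to kill the linear (in $W$) part of $D$ by a change of generators.

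Fix a K-S basis $\{w_i\}$ of $W$ refining the partial order by degree and proceed by induction on the index $k$, building equivalences $(D, C) \equiv^\B_\B (D^k, C^k)$ with $D^k(w_i) \in \B \otimes \land^{\geq 2} W$ for all $i \leq k$.  For the induction step, write
\[
D^{k-1}(w_k) = \sum_{j<k} b_{kj} \otimes w_j + D^{k-1}_{\geq 2}(w_k).
\]
The induction hypothesis gives $D^{k-1}(w_j) \in \B \otimes \land^{\geq 2} W$ for $j<k$, and the K-S condition forces $D^{k-1}_{\geq 2}(w_k)$ to involve only generators in $W(k)$.  A short Leibniz computation then reduces the $\B \otimes W$ component of $(D^{k-1})^2(w_k) = 0$ to $\sum_{j<k} d_B(b_{kj}) \otimes w_j = 0$, so that each $b_{kj}$ is $d_B$-closed.

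This is where the two hypotheses of \thmref{Hopf} combine:  since $|w_k|$ and $|w_j|$ are both odd, $|b_{kj}| = |w_k| + 1 - |w_j|$ is odd, and the vanishing of $H^{\mathrm{odd}}(\B)$ supplies $\eta_{kj} \in \B$ with $d_B(\eta_{kj}) = b_{kj}$.  Set $\eta_k = \sum_{j<k} \eta_{kj} \otimes w_j$, and let $\Phi$ be the algebra automorphism of $\B \otimes \land W$ that is the identity on $\B$ and on every generator $w_i$ with $i \neq k$, and sends $w_k$ to $w_k - \eta_k$.  Then $D^k := \Phi^{-1} \circ D^{k-1} \circ \Phi$ and $C^k := (\Phi^{-1} \otimes_\B \Phi^{-1}) \circ C^{k-1} \circ \Phi$ yield the next equivalence, exactly as in the change-of-generators step of \thmref{D=0}:  a direct computation using the induction hypothesis gives $D^k(w_k) \in \B \otimes \land^{\geq 2} W$ and $D^k(w_i) = D^{k-1}(w_i)$ for $i<k$.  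Finite type allows the induction to terminate in each degree, and the infinite-dimensional case is handled by the same stabilization argument used at the end of the proof of \thmref{D=0}.  The main obstacle is the parity/cohomology calculation that supplies the $\eta_{kj}$; once it is in hand, the multiplication $C$ is carried along automatically by conjugation with $\Phi$, and the reduction to \thmref{D=0} is formal.
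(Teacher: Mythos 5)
Your proposal is correct and follows essentially the same route as the paper: reduce to \thmref{D=0} by inductively killing the linear part of $D$ over a K-S basis, observing that the coefficients $b_{kj}$ are odd-degree $d_B$-cycles (hence exact by hypothesis (1) and the odd grading of $W$), and conjugating both $D$ and $C$ by the resulting change-of-generators automorphism, with the same stabilization remark for infinite-dimensional $W$.
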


\begin{proof} 
We show first that we may adjust the relative model so as to satisfy the hypotheses of \thmref{D=0}, and then make our conclusion from that result.  Let $C$ denote the fibrewise multiplication of models induced by the multiplication for $X$. 
Suppose $\{w_i\}$ is a K-S basis of $W$; we argue by induction over $i$.  Induction starts with $i=1$, where we have $D(w_1) = 0$.

Now suppose inductively that we have $D(w_i) \in \B \otimes
\land^{\geq 2} W$ for all $i < k$.  We may write 
$$D(w_k) = \sum_{i<k} b_{i,k} w_i  + \chi,$$
for $\chi \in \B \otimes \land^{\geq 2} W$.  Then $0 = D^2(w_k) = \sum_{i<k} d_B(b_{i,k} )w_i  - \sum_{i<k} b_{i,k} D(w_i) + D(\chi)$.  By our induction hypothesis, and the fact that $\B \otimes
\land^{\geq 2} W$ is $D$-stable, it follows that $d_B(b_{i,k}) = 0$ for each $i$.  Then, since $w_k$ and each $w_i$ are of odd degree, each $b_{i, k}$ is an odd-degree cycle of $B$, hence exact: $b_{i,k} = d_B(\eta_{i,k})$ for each $i$ and $k$, some $\eta_{i,k} \in B$.  

Define a change of generators  isomorphism 
$$\Psi \colon \B \otimes \land W \to \B \otimes \land W$$
by setting $\Psi = \mathrm{id}$ on $\B$ and on all basis elements $w_i$ of $W$ other than $w_k$.  On $w_k$, define
$$\Psi(w_k) = w_k -  \sum_{i<k} \eta_{i,k} w_i.$$
Then define $D' = \Psi^{-1} \circ D \circ \Psi$ and 
$$C' = (\Psi^{-1}\otimes_\B \Psi^{-1})\circ C \circ \Psi,$$
so that $\Psi$ gives an equivalence $(C, D) \equiv^\B_\B (C', D')$.    
From the construction of $\Psi$, we have that $D' = D$ on basis elements $w_i$ with $i <k$, and that 
$D'(w_k) \in \B \otimes \land^{\geq 2} W$. It  follows immediately by induction that, for finite-dimensional $W$,  we have an equivalence $(C, D) \equiv^\B_\B (C', D')$ such that $D'$ satisfies the hypotheses of \thmref{D=0}.  If $W$ is infinite-dimensional, then the same conclusion holds as the change of generators used above is the identity on the $w_i$ with $i < k$, which means that the differential and the multiplication of models stabilize as we proceed over the K-S basis of $W$.  \end{proof}
We can now deduce Theorem \ref{main}. 
\begin{proof}[Proof of Theorem \ref{main}]
Let $X$ be a fibrewise   H-space satisfying the conditions in Theorem \ref{main}.   Then the relative   model $\B \to \B \otimes \land W$ for $X \to B$  satisfies the conditions of Theorem \ref{Hopf}. Furthermore, $X, B$ and $G$ satisfy the hypotheses of Theorem \ref{homtrivial}.  Theorems \ref{Hopf} and  \ref{homtrivial} thus imply $X$ is rationally fibrewise trivial.  \end{proof}

We observe that an arbitrary fibrewise H-space need not have a Sullivan model that satisfies the conclusion of \thmref{Hopf}.  Indeed, if either of the hypotheses of \thmref{Hopf} is omitted, then the conclusion may fail, as the following examples illustrate.

\begin{example}\label{ex: non-Hopf}
Let  $X$ be a (rational) fibrewise space with minimal model of the form $(\land (b_3), 0) \to (\land (b_3) \otimes \land (w_3, w_5), D, C)$, where subscripts denote the degrees of generators, $D$ is defined on generators by $D(b_3) = D(w_3) = 0$ and $D(w_5) = b_3 w_3$, and $C = C_0$.  Here we have $H^3(B;\Q) \not=0$, and $X$ clearly fails to satisfy the conclusion of \thmref{Hopf}.  Note that $X$ may be described as the principal $K(\Z, 5)$-fibration over $S^3 \times S^3$ obtained by pulling back the principal $K(\Z, 5)$-fibration over $K(\Z, 6)$, over the map given by a fundamental class in $H^6(S^3 \times S^3)$.  Up to rational homotopy, then, we may view $X$ as a fibrewise space over $S^3$ with fibre $S^3 \times K(\Z, 5)$.  

Next, consider the free loop space on $S^2$, which we denote by $\Lambda S^2$.  This is a fibrewise H-space over $S^2$, and has relative model
$$(\land(x, y),  d_B) \to (\land(x,y)\otimes \land(\overline{x}, \overline{y}), D)$$
in which $|x| = |\overline{y}| = 2$, $|y| = 3$, $|\overline{x}| = 1$ and the differential is given on generators by $d_B(x) = 0$, $d_B(y) = x^2$, $D(\overline{x}) = 0$, and $D(\overline{y}) = -2 x \overline{x}$ (\cite[Ex.5.12]{FOT}).  In this case, one has the standard fibrewise multiplication of models defined by $C_0(\overline{x}) = \overline{x} + \overline{x}'$ and $C_0(\overline{y}) = \overline{y} + \overline{y}'$.  But here one also has a non-equivalent fibrewise multiplication of models, defined by $C(\overline{x}) = \overline{x} + \overline{x}'$ and $C_0(\overline{y}) = \overline{y} + \overline{y}' + \overline{x}\,\overline{x}'$.  From the calculations in \cite[Sec.4.4]{FTV}, it follows that this latter is the fibrewise multiplication of models determined by the fibrewise H-structure of the free loop space.  In this case, we have $H^*(B;\Q)$ zero in odd degrees, but the fibre $\Omega S^2$ has a non-zero rational homotopy group in degree $2$.  Clearly $\Lambda S^2$ fails to satisfy the conclusion of  \thmref{Hopf}.
\end{example}
 
We now turn to the fibrewise extension of the Leray-Samelson Theorem.
We assume the fibrewise  multiplication $m \colon X \times_B X \to X$ is (pointed-fibrewise, homotopy) associative. This means that the diagram
$$\xymatrix{X \times_B X \times_B X \ar[r]^-{m \times_B 1}
\ar[d]_-{1 \times_B m}& X \times_B X \ar[d]^{m} \\
X \times_B X \ar[r]_-{m} & X}$$
is pointed-fibrewise homotopy commutative.  The diagram above translates into a diagram
\begin{equation} \label{assoc} \xymatrix{\B \otimes \land W \ar[r]^-{C}
\ar[d]_-{C}& \B \otimes \land W \otimes \land
W \ar[d]^{C \otimes_{\B} 1} \\
\B \otimes \land W \otimes \land W \ar[r]_-{1 \otimes_{\B}
C} & \B \otimes \land W \otimes \land W\otimes \land W} \end{equation}
which  commutes up to DG homotopy under and over  $\B$.  

Suppose now that $H(\B)$ is evenly graded and $W$ is oddly graded, as in the hypotheses of \thmref{Hopf}.   Then $D(W) =0$ in the relative Sullivan model $ \B \to \B  \otimes \land W$ (\thmref{Hopf}) and hence also $D(\land
W \otimes \land W\otimes \land W) = 0$ in $\B \otimes \land
W \otimes \land W\otimes \land W$.

The following is   our main technical result.  It represents a fibrewise extension of the Leray-Samelson Theorem.

\begin{theorem}\label{LS}
 Let  $(\B, d_B) \to (\B \otimes \land W, D)$ be a relative model  with projection a DG algebra map  with  a given fibrewise multiplication $C.$  Suppose $W$ and $\B$ are of finite type satisfying (1)  $\B$ has zero odd-dimensional cohomology and (2) $W$ is  zero  in even dimensions.   If $C$ is associative,  then there is an equivalence of  fibrewise multiplications of models $(D, C) \equiv^\B_\B (d_B\otimes 1, C_0).$  Here $C_0$ denotes the standard multiplication defined by $C_0(w) = w + w'$ for each $w \in W$.
\end{theorem}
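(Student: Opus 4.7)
The plan is to proceed in two stages. The first stage applies \thmref{Hopf} to replace the given $(D, C)$ by an equivalent pair in which $D = d_B \otimes 1$; that is, $D(w) = 0$ for every $w \in W$. This reduces the remaining problem to adjusting only the comultiplication $C$ to the standard $C_0$, with the differential fixed.

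The second stage is an induction on the length filtration of $C$. Suppose inductively that $C(w) = C_0(w) + C_r(w) + C_{r+1}(w) + \cdots$ for every $w \in W$ and some $r \geq 2$ (the base case $r = 2$ being automatic from the definition of a fibrewise multiplication of models). The aim of the inductive step is to produce a DG algebra automorphism $\Psi \colon \B \otimes \land W \to \B \otimes \land W$, the identity on $\B$ and satisfying $\Psi(w) \equiv w$ modulo terms of $\land W$-length at least two, such that $C' := (\Psi^{-1} \otimes_\B \Psi^{-1}) \circ C \circ \Psi$ has trivial length-$r$ component. Iterating and invoking the stabilization argument used in the proofs of \thmref{D=0} and \thmref{Hopf} then handles the case of infinite-dimensional $W$.

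The cornerstone of the inductive step is an identity extracted from the associativity diagram \ref{assoc}. Expanding $(1 \otimes_\B C) \circ C = (C \otimes_\B 1) \circ C$ on a generator $w$ and collecting the components of total $\land W$-length $r+1$ in $\B \otimes \land W \otimes \land W \otimes \land W$ yields
\begin{equation*}
(1 \otimes_\B C_0)\bigl(C_r(w)\bigr) - (C_0 \otimes_\B 1)\bigl(C_r(w)\bigr) = (C_r \otimes_\B 1)\bigl(C_0(w)\bigr) - (1 \otimes_\B C_r)\bigl(C_0(w)\bigr).
\end{equation*}
Using the notation $\chi_I = \sum_\varepsilon b^\varepsilon_I w^\varepsilon_I$ introduced before \thmref{D=0}, this translates into a shuffle-type relation on the coefficients $b^\varepsilon_I \in \B$. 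Combined with the fact that each $b^\varepsilon_I$ is a $d_B$-cycle (a consequence of $D = d_B \otimes 1$ and $C$ being a DG map), this identity expresses that $C_r$ is a $2$-cocycle in an appropriate cobar-type complex over $\B$.

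The heart of the argument, and what I expect to be the main obstacle, is showing that this cocycle is a coboundary: that there exist $\eta_w \in \B \otimes \land^{\geq 2} W$ consisting of $d_B$-cycles and satisfying $C_r(w) \equiv C_0(\eta_w) - \eta_w \otimes 1 - 1 \otimes \eta_w$ modulo higher-length terms. This is the $\B$-linear counterpart of the classical algebraic input to the Leray-Samelson theorem, namely that a graded-commutative Hopf algebra over $\Q$ generated in odd degrees is primitively generated. Following the scheme of \cite{AL}, I would construct $\eta_w$ by symmetrizing the coefficients $b^\varepsilon_I$ over shuffle permutations, using the odd-degree hypothesis on $W$ to secure the necessary sign cancellations and the characteristic-zero coefficient ring to invert the integer denominators that arise. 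Defining $\Psi(w) = w - \eta_w$ and extending as a $\B$-algebra automorphism then eliminates the length-$r$ term of $C$, completing the inductive step and hence the theorem.
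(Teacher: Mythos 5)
Your overall strategy coincides with the paper's: reduce to $D = d_B\otimes 1$ via \thmref{Hopf}, then induct on the length filtration of $C$, use associativity to constrain the length-$r$ defect, and absorb it by a change of generators $\Psi(w) = w - \eta_w$. The identity you extract from the associativity diagram is, after rearrangement, exactly the identity $\alpha(\chi) + \beta(\chi) = \gamma(\chi) + \delta(\chi)$ that drives the paper's argument, and the target form $C_0(\eta_w) - \eta_w\otimes 1 - 1\otimes \eta_w = \sum_I b_I(S_I - w_I - w'_I)$ is the paper's equation (\ref{eq:basic form}). However, there is a genuine gap in how you propose to close the inductive step.

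The gap is that associativity holds only up to DG homotopy under and over $\B$, so your shuffle identity on the length-$r$ component holds only modulo terms whose $\B$-coefficients lie in $d_B(\B)$ (the homotopy contributes exactly such terms; see the derivation of (\ref{eq:assoc})). Consequently you cannot conclude that all of $C_r(w)$ has the form $\sum_I b_I(S_I - w_I - w'_I)$: only the part of $C_r(w)$ with coefficients in a chosen complement $N$ of $d_B(\B)$ inside the cycles does (this is \propref{prop:alpha beta gamma delta}). The remaining part of $C_r(w)$ is $D$-exact but is \emph{not} in general of the form $C_0(\eta) - \eta - \eta'$ --- a single mixed monomial $d_B(\eta)\,w_{i_1}w'_{i_2}$ already fails --- and it also cannot be absorbed by a change of generators $\Psi(w_k) = w_k - \eta_w$ without either leaving it untouched or (if $\eta_w$ has non-cycle coefficients) destroying the property $D(w_k)=0$. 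That part must instead be discarded by an explicit DG homotopy of comultiplications, which is legitimate because equivalence of fibrewise multiplications of models is defined only up to such homotopy; your proposal never invokes this mechanism. Relatedly, when $r$ is even the coefficients of $C_r(w)$ are odd-degree cycles in $\B$, hence exact by hypothesis (1), so the whole of $C_r(w)$ is of this second type and associativity plays no role at all in that case. Finally, the combinatorial assertion that a solution of the shuffle identity must be of the form $\sum_I b_I(S_I - w_I - w'_I)$ is the real technical heart (the paper devotes \lemref{lem:no repeats}--\lemref{lem:alpha beta gamma} to it), and "symmetrizing over shuffle permutations" is a plausible heuristic but not yet a proof.
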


\begin{proof}
By \thmref{Hopf}, we may assume that $D = d_B\otimes 1$.   
As in previous arguments, we proceed by induction over a K-S basis 
$\{w_i\}$ of $W$, and show that, for each $k\geq 1$, we have an equivalence $(d_B\otimes 1, C) \equiv^\B_\B (d_B\otimes 1, C^k)$, with $C^k(w_i) = C_0(w_i) = w_i + w'_i$ for $i \leq k$.  

For degree reasons, we must have $C(w_1) = C_0(w_1) = w_1 + w'_1$, and so setting $C^1 = C$ starts our induction.  

Now suppose inductively that we have an equivalence of  fibrewise multiplications of models  $(D, C) \equiv^\B_\B (D, C^{k-1})$ for some $k\geq 2$, with $D= d_B\otimes1$ (and so $D(w_i) = 0$ for all $i$) and $C^{k-1}(w_i) = C_0(w_i) = w_i + w'_i$ for all $i < k$.  
First write 
$$C^{k-1}(w_k) = w_k + w'_k + P_2 + P_3 + \cdots,$$
with each $P_r \in \mathcal{B}\otimes \sum_{s=1}^{s=r-1}( \land^s W
\otimes \land^{r-s} W)$ of odd degree.  Since $0 = C^{k-1}(Dw_k) = D\big(C^{k-1}(w_k)\big)$, and $D(W) = 0$, it follows that each $P_r$ is a $D$-cycle.  
We will show that $C^{k-1}$ may be successively adjusted so as to remove each term $P_r$.  For this, assume inductively that for some $r \geq 2$ we have an equivalence of fibrewise multiplications  of models
$(D, C^{k-1}) \equiv^\B_\B (D, C^{k-1}_r)$, such that $C^{k-1}_r(w_i) = C^{k-1}(w_i) = C_0(w_i) = w_i + w'_i$ for all $i < k$, and  
$$C^{k-1}_r(w_k) = w_k + w'_k + P_r + P_{r+1} + \cdots$$
There are two cases, which we handle slightly differently.

\noindent\textit{Case I. $r = 2m$ is even.}
We may write
$$P_{2m} = \sum_{|I| = 2m, \varepsilon} \beta^{\varepsilon}_I w^{\varepsilon}_I,$$
for $\beta^{\varepsilon}_I \in \B$.  Now $D(P_{2m}) = 0$ implies that each $d_B(\beta^{\varepsilon}_I) = 0$, and since $P_{2m}$ and each $w_i$ for $i <k$ is of odd degree, it follows that each $\beta^{\varepsilon}_I$ is an odd-degree cycle in $\B$.  Our assumption on the homology of $\B$ now gives that each $\beta^{\varepsilon}_I = d_B(\eta^{\varepsilon}_I)$ for some $\eta^{\varepsilon}_I \in \B$, and hence we have  
$$P_{2m} = D\big(\sum_{|I| = 2m, \varepsilon} \eta^{\varepsilon}_I w^{\varepsilon}_I\big).$$
For brevity, write this last term as $D(\eta_{2m})$.  Define a fibrewise multiplication of models 
$C^{k-1}_{r+1} \colon \mathcal{B} \otimes \land W \to \mathcal{B} \otimes \land W\otimes \land W$ by 
$C^{k-1}_{r+1}(w_i) = C^{k-1}_{r}(w_i)$ for all $i \not= k$, and $C^{k-1}_{r+1}(w_k) = w_k + w'_k +  P_{r+1} + \cdots$.  Then define a DG homotopy over and under $\B$ 
$$H_{2m} \colon \B \otimes \land W \to \B \otimes \land W\otimes \land W\otimes (t, dt)$$
by setting $H_{2m} = \mathrm{id}$ on $\B$,  $H_{2m}(w_i) = C^{k-1}_r(w_i)$ for $i \not= k$, and 
$$H_{2m}(w_k) = C^{k-1}_r(w_k) - P_{2m} t - \eta_{2m} dt.$$
Using the fact that $D(w_i) = 0$ for all $i$, we readily check that $H_{2m}$ so defined is a DG map.  Clearly, this gives a DG homotopy $C^{k-1}_r\sim ^\B_\B C^{k-1}_{r+1}$.  Hence, the identity gives an equivalence of fibrewise multiplications of models 
$(D, C^{k-1}_r) \equiv^\B_\B (D, C^{k-1}_{r+1})$.

\noindent\textit{Case II. $r = 2m+1$ is odd.}
We may write
$$P_{2m+1} = \sum_{|I| = 2m+1, \varepsilon} \beta^{\varepsilon}_I w^{\varepsilon}_I,$$
for $\beta^{\varepsilon}_I \in \B$.  Arguing as before, we have that each $\beta^{\varepsilon}_I$ is a cycle in $\B$, but now of even degree, and thus not necessarily exact.   To handle this situation, we choose a direct sum decomposition $E \oplus N \cong \mathcal{Z}(\B)$ of the cycles (in all degrees) of $\B$, with $E = d_B(\B)$ and $N$ a complement.  As a vector space, therefore, $N \cong H^*(\B)$.  Now refine the sum above, to write
\begin{equation}\label{eq:P3}
P_{2m+1} = \sum_{|I| = 2m+1, \varepsilon} \alpha^{\varepsilon}_I w^{\varepsilon}_I + \sum_{|I| = 2m+1, \varepsilon} \gamma^{\varepsilon}_I w^{\varepsilon}_I,
\end{equation}
with each $\alpha^{\varepsilon}_I = d_B(\eta^{\varepsilon}_I)$ for some $\eta^{\varepsilon}_I \in \B$, and each $\gamma^{\varepsilon}_I  \in N \subseteq \mathcal{Z}(\B)$.  As in the previous step, for brevity we write the first of these sums as $D(\eta_{2m+1})$.  
Now we use the  associativity of $C$ (which entails the associativity of any equivalent fibrewise multiplication of models).  Let $K$ be a DG homotopy over and under $\B$ that makes (\ref{assoc}) DG homotopy commute (with $C^{k-1}_r$ replacing $C$ there).
Then we have 
$$K(w_k) = (1\otimes_\B C^{k-1}_r)\circ C^{k-1}_r(w_k) + \sum_{i\geq 1} A_i t^i + \sum_{j\geq 0} B_j t^j dt,$$
for elements $A_i, B_j \in \B\otimes \land W\otimes \land W\otimes \land W$, such that substituting $t = 1$ and $dt = 0$ yields $(C^{k-1}_r\otimes_\B 1)\circ C^{k-1}_r(w_k)$.   Equating $0 = K(Dw_k) = D\big(K(w_k)\big)$, together with the fact that $0 = (1\otimes_\B C^{k-1}_r)\circ C^{k-1}_r(Dw_k) = D\big((1\otimes_\B C^{k-1}_r)\circ C^{k-1}_r(w_k)\big)$, yields the identities $D(A_p) = 0$ and $pA_p = D(B_{p-1})$, for each $p \geq 1$.  Thus we have 
\begin{equation}\label{eq:assoc}
(C^{k-1}_r\otimes_\B 1)\circ C^{k-1}_r(w_k) = (1\otimes_\B C^{k-1}_r)\circ C^{k-1}_r(w_k) + D\big(\sum_{i\geq 1} \frac{1}{i} B_{i-1}\big).\end{equation}
Note that the last term appearing here is in $d_B(\B)\otimes \land W\otimes \land W\otimes \land W$.  
From (\ref{eq:P3}), denote the two sums by $P'_{2m+1} = \sum_{|I| = {2m+1}, \varepsilon} \alpha^{\varepsilon}_I w^{\varepsilon}_I = D(\eta_{2m+1})$ and $P''_{2m+1} = \sum_{|I| = {2m+1}, \varepsilon} \gamma^{\varepsilon}_I w^{\varepsilon}_I$.   
If we equate terms  from 
$\mathcal{B}\otimes \sum_{l+s+n = {2m+1}}( \land^l W \otimes \land^{s}
W\otimes \land^{n} W)$ that appear in (\ref{eq:assoc}),
then we see that  $(C^{k-1}_r\otimes_\B 1)(P'_{2m+1})$ and $(1\otimes_\B C^{k-1}_r)(P'_{2m+1})$  contribute terms from $d_B(\B)\otimes \land W\otimes \land W\otimes \land W$, whereas $(C^{k-1}_r\otimes_\B 1)(P''_{2m+1})$ and $(1\otimes_\B C^{k-1}_r)(P''_{2m+1})$  contribute terms from $N\otimes \land W\otimes \land W\otimes \land W$.  Since $N$ and $d_B(\B)$ are complementary in $\mathcal{Z}(\B)$, we may equate these contributions separately, and we find that we have the  identity
\begin{equation}\label{eq:basic identity}
\alpha(P''_{2m+1}) + \beta(P''_{2m+1}) = \gamma(P''_{2m+1}) + \delta(P''_{2m+1}).
\end{equation}
Here, we have used the notations $\alpha, \beta, \gamma, \delta$ to
denote algebra maps $\mathcal{B}\otimes \land W \otimes \land W\to
\mathcal{B}\otimes \land W \otimes \land W\otimes \land W$, each
defined as the identity on $\mathcal{B}$ and extended on generators
$w$ and $w'$ of $\land W \otimes \land W$ by: $\alpha(w) = w$ and
$\alpha(w') = w'$; $\beta(w) = w + w'$ and $\beta(w') = w''$,
$\gamma(w) = w$ and $\gamma(w') = w' + w''$; $\delta(w) = w'$ and
$\delta(w') = w''$.  From \propref{prop:alpha beta gamma delta}
below, we have that $P''_{2m+1}$ is of the form
\begin{equation}\label{eq:basic form}
P''_{2m+1} = \sum_I b_I (S_I - w_I - w'_I),
\end{equation}
where the sum is over $({2m+1})$-tuples $I = (i_1, i_2, \ldots,  i_{2m+1})$ with $i_1 <
i_2< \cdots < i_{2m+1}$ and $b_I \in \B$.    
Now use this expression to define an isomorphism
$\Psi$ of $\mathcal{B}\otimes \land W$ as
the identity on $\mathcal{B}$ and all generators of $\land W$ other than $w_k$,  and 
$$\Psi(w_k) = w_k + \sum_I b_I w_I.$$
Use this to define a (tautologically equivalent) fibrewise multiplication of models to $\overline{C}^{k-1}_{r}$, as $\overline{C}^{k-1}_{r} =
(\Psi\otimes_{\mathcal{B}}\Psi)\circ C^{k-1}_{r}\circ \Psi^{-1}$.  One easily checks  that
$\overline{C}^{k-1}_r$ agrees with $C^{k-1}_{r}$ on generators $w_i$ with $i < k$, and on
$w_k$ takes the form
$$\overline{C}^{k-1}_r(w_k) = w_k + w'_k + P'_{2m+1} + P_{r+1} + \cdots.$$
Finally, define the fibrewise multiplication of models $C^{k-1}_{r+1}$ on $\B \otimes \land W$ as 
$C^{k-1}_{r+1}(w_i) = \overline{C}^{k-1}_r(w_i)$ for each $i \not= k$, and $C^{k-1}_{r+1}(w_k) = w_k + w'_k +  P_{r+1} + \cdots$, i.e., omitting the term $P'_{2m+1}$ from $\overline{C}^{k-1}_r(w_k)$.
Now define a DG homotopy 
$$H_{2m+1} \colon \B \otimes \land W \to \B \otimes \land W\otimes \land W\otimes (t, dt)$$
by setting $H_{2m+1} = \mathrm{id}$ on $\B$,  $H_{2m+1}(w_i) = C^{k-1}_{r+1}\circ \Psi(w_i) = (\Psi\otimes_\B\Psi)\circ C^{k-1}_r(w_i)$ for $i \not= k$, and 
$$H_{2m+1}(w_k) = w_k + w'_k + \sum_{l\geq r+1}P_l + \sum b_I S_I +  P'_{2m+1}  t + \alpha dt.$$
This is a DG homotopy over and under $\B$ that makes the following diagram homotopy commute over and under $\B$:
$$
\xymatrix{\B \otimes \land W \ar[d]_{\Psi} \ar[r]^-{C^{k-1}_r}&\B \otimes \land W \otimes \land
W \ar[d]^{\Psi\otimes_\B\Psi}  \\
\B \otimes \land W \ar[r]_-{C^{k-1}_{r+1}} &\B \otimes \land W
\otimes \land W }
$$
That is, we have an equivalence  
$(D, C^{k-1}_r) \equiv^\B_\B (D, C^{k-1}_{r+1})$.

In either case, then, we have an equivalent fibrewise multiplication of models $C^k_{r+1}$ that agrees with $C^k_r$ on generators $w_i$ with $i < k$, and satisfies
$$C^k_{r+1}(w_k) = w_k + w'_k +   P_{r+1} + \cdots.$$
By induction, we have an equivalence  
$(D, C^{k-1}) \equiv^\B_\B (D, C^{k})$, where  $C^{k}$ agrees with
$C_0$ on generators through $w_k$.

For finite-dimensional $W$, the theorem follows immediately by induction over the (finitely many) generators.  In case $W$ is infinite-dimensional, note that in the above steps, in which we built an equivalence between $C^{k-1}$ and $C^{k}$, the DG homotopies and the change of generators isomorphism were all stationary, or the identity, on generators $w_i$ with $i < k$.  Since our fibrewise multiplications, and the equivalences between them,  ``stabilize" in this way, we obtain the same conclusion even when $W$ is infinite-dimensional. 
\end{proof}

If either  condition (1) or condition (2) of \thmref{LS} do not hold, then the conclusion may fail, as the following examples illustrate.

\begin{example}
When $B = *$ and $G = \Omega Y$ is  a loop space,  the conclusion of \thmref{LS} holds  if and only if the rational homotopy groups $\pi_*(G) \otimes \Q$ equipped  with the Samelson product is an abelian graded Lie algebra.  Thus taking  $G = \Omega S^2$ we obtain an example of a   fibrewise associative H-space satisfying condition (1) but not condition (2) for which the conclusion of \thmref{LS} fails. Another example is given in  \exref{ex: non-Hopf}. The free loop space $\Lambda S^2$ satisfies condition (1) but not condition (2).  The fibrewise multiplication of models $C$ described in \exref{ex: non-Hopf} is easily checked to be associative, but non-equivalent to the standard multiplication $C_0$.

On the other hand, consider the fibrewise multiplication of models $(\land (b_3) \otimes \land (w_3, w_9), D=0, C)$, where subscripts denote the degrees of generators,  and $C(w_3) = C_0(w_3)$ but $C(w_9) = w_9 + w'_9 + b_3 w_3 w'_3$.   This example satisfies condition (2) but not condition (1) of \thmref{LS}, and is likewise easily checked to be associative, but non-equivalent to the standard multiplication $C_0$. 
\end{example}

We now show how to deduce equation (\ref{eq:basic
form}) from the identity (\ref{eq:basic identity}).  We recall that
$\alpha, \beta, \gamma, \delta$ denote algebra maps
$\mathcal{B}\otimes \land W \otimes \land W\to \mathcal{B}\otimes
\land W \otimes \land W\otimes \land W$.  In fact $\alpha$ is simply
the inclusion
$$\mathcal{B}\otimes \land W \otimes \land W \to \mathcal{B}\otimes
\land W \otimes \land W\otimes 1 \subseteq \mathcal{B}\otimes \land
W \otimes \land W\otimes \land W$$
and $\delta$ the inclusion
$$\mathcal{B}\otimes \land W \otimes \land W \to \mathcal{B}\otimes
1 \otimes \land W\otimes \land W \subseteq \mathcal{B}\otimes \land
W \otimes \land W\otimes \land W.$$
The maps $\beta$ and $\gamma$ are each defined as the identity on
$\mathcal{B}$ and extended on generators $w$ and $w'$ of $\land W
\otimes \land W$ by: $\beta(w) = w + w'$ and $\beta(w') = w''$;
$\gamma(w) = w$ and $\gamma(w') = w' + w''$.

\begin{proposition}\label{prop:alpha beta gamma delta}
Suppose $\chi \in \mathcal{B}\otimes\land W \otimes\land W $ is of
homogeneous length $r \geq 3$ and ``mixed" in $W\oplus W$, i.e.,
$$\chi \in \sum_{1 \leq s \leq r-1}
\mathcal{B}\otimes\land^{s} W \otimes\land^{r-s} W.$$
If $\chi$ satisfies
\begin{equation}\label{eq:alpha beta gamma delta}
\alpha(\chi) + \beta(\chi) = \gamma(\chi) + \delta(\chi),
\end{equation}
then we may write $\chi$ in the form
$$\chi = \sum b_I (S_I - w_I - w'_I)$$
for suitable $b_I \in \mathcal{B}$, where the sum is over sequences
$I = (i_1, i_2, \dots , i_r)$ of length $r$ that are strictly
increasing, i.e., for which $i_1 < i_2 < \cdots < i_r$.
\end{proposition}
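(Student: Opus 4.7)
The plan is a coefficient-chasing argument carried out in three steps: reduce to a single index support, extract the linear relations on the coefficients $b^\varepsilon$, and then show those relations connect all mixed $\varepsilon$'s.

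\emph{Step 1 (Reduction).} Each of $\alpha, \beta, \gamma, \delta$ is an algebra homomorphism sending the pair $(w_i, w_i')$ into the subalgebra of $\mathcal{B}\otimes(\land W)^{\otimes 3}$ generated by the three copies $w_i$, $w_i'$, $w_i''$ of the $i$-th generator. Consequently all four maps preserve the index-support of a basis monomial, and the hypothesis (\ref{eq:alpha beta gamma delta}) decomposes over strictly increasing $r$-tuples $I = (i_1 < \cdots < i_r)$. Writing $\chi = \sum_I \chi_I$ with $\chi_I = \sum_\varepsilon b^\varepsilon w_I^\varepsilon$ summed over binary $\varepsilon$ of weight $1 \leq |\varepsilon| \leq r-1$, I may fix one such $I$ and work with $\chi_I$.

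\emph{Step 2 (Coefficient equations).} Index the support-$I$ part of the codomain by ternary tuples $\tau \in \{0,1,2\}^r$, where $\tau_j$ records the tensor slot of the copy of $w_{i_j}$ in the basis monomial $w_I^\tau$. A direct bookkeeping yields: $\alpha$ contributes $b^\tau$ to the coefficient of $w_I^\tau$ precisely when $\tau \in \{0,1\}^r$; $\delta$ contributes $b^{\tau - \mathbf{1}}$ precisely when $\tau \in \{1,2\}^r$; $\beta$ always contributes $b^{\varepsilon_\beta(\tau)}$ with $(\varepsilon_\beta(\tau))_j = 1$ iff $\tau_j = 2$; and $\gamma$ always contributes $b^{\varepsilon_\gamma(\tau)}$ with $(\varepsilon_\gamma(\tau))_j = 1$ iff $\tau_j \neq 0$. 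Every contribution carries the same sign, arising from reordering graded-commutative factors into slot-order and depending only on $\tau$, so signs drop out of the coefficient equation. When $\tau \in \{0,1\}^r$ one has $\varepsilon_\gamma(\tau) = \tau$ and the $\alpha$- and $\gamma$-contributions cancel; similarly $\tau \in \{1,2\}^r$ gives $\beta = \delta$. The only nontrivial constraints arise when $\tau$ contains both a $0$ and a $2$, and they read
$$b^{\varepsilon_\beta(\tau)} = b^{\varepsilon_\gamma(\tau)},$$
with $\varepsilon_\beta(\tau) \leq \varepsilon_\gamma(\tau)$ differing precisely at positions where $\tau_j = 1$.

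\emph{Step 3 (Connectedness and conclusion).} Given any mixed binary tuples $\varepsilon \leq \varepsilon'$ that agree on the support of $\varepsilon$, setting $\tau_j = 2$ where $\varepsilon_j = 1$, $\tau_j = 1$ where $\varepsilon'_j > \varepsilon_j$, and $\tau_j = 0$ elsewhere yields a $\tau$ containing both $0$ and $2$ (since both $\varepsilon$ and $\varepsilon'$ are mixed), with $\varepsilon_\beta(\tau) = \varepsilon$ and $\varepsilon_\gamma(\tau) = \varepsilon'$. Hence $b^\varepsilon = b^{\varepsilon'}$. For $r \geq 3$, any two mixed $r$-tuples are linked by a zig-zag chain of such comparable mixed pairs: ordinarily join-through or meet-through, and in the extreme case $\varepsilon_1 \vee \varepsilon_2 = \mathbf{1}$, $\varepsilon_1 \wedge \varepsilon_2 = \mathbf{0}$, through an intermediate weight-two tuple $\eta = e_{j_1} + e_{j_2}$ with $j_k$ in the support of $\varepsilon_k$ (which is mixed because $r \geq 3$). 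Hence all $b^\varepsilon$ equal a common $b_I \in \mathcal{B}$, so
$$\chi_I = b_I \sum_{\varepsilon\text{ mixed}} w_I^\varepsilon = b_I\bigl((w_{i_1}+w_{i_1}')\cdots(w_{i_r}+w_{i_r}') - w_I - w_I'\bigr) = b_I(S_I - w_I - w_I'),$$
and summing over $I$ gives the statement.

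The main obstacle is the connectedness assertion in Step 3, which is precisely where $r \geq 3$ enters essentially: for $r = 2$ every ternary $\tau$ containing both $0$ and $2$ satisfies $\varepsilon_\beta(\tau) = \varepsilon_\gamma(\tau)$, so the derived relations are vacuous and the $b^\varepsilon$'s would remain unconstrained. A secondary pitfall is sign-tracking in the graded-commutative setting, but since the reordering sign of $w_I^\tau$ depends only on the slot pattern $\tau$ and appears uniformly on all four contributions, it cancels from each coefficient equation and the argument proceeds as above.
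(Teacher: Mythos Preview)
Your approach is genuinely different from the paper's and, modulo one gap, correct.  The paper argues by induction on $r$: it writes $\chi_I = \chi_1 w_{i_r} + \chi_2 w'_{i_r}$, expands the identity, and separates the coefficients of $w_{i_r}$, $w'_{i_r}$, $w''_{i_r}$ to obtain three simpler identities, which are then handled by a hierarchy of subsidiary lemmas (characterizing solutions of $\beta=\gamma$, of $\alpha+\beta=\gamma$, of $\beta=\gamma+\delta$, and of $\beta=\delta$), each proved by its own induction on $r$.  Your ternary-tuple bookkeeping bypasses all of this: once the relations $b^{\varepsilon_\beta(\tau)} = b^{\varepsilon_\gamma(\tau)}$ are extracted, the problem becomes a transparent connectedness statement on the poset of mixed binary $r$-tuples under coordinatewise order, and $r\geq 3$ enters exactly where it should.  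This is more direct and more conceptual than the paper's layered induction.

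The gap is in Step~1.  The four maps preserve the index \emph{multiset} of a monomial, so the identity decomposes over non-decreasing $r$-tuples $I$, not only strictly increasing ones.  Since $W$ is concentrated in odd degrees, an index can appear at most twice (once as $w_i$, once as $w'_i$), but such repetitions are not excluded a priori, and your Steps~2--3 do not apply to them: when $i_s = i_{s+1}$ one has $w_I^\varepsilon = 0$ whenever $\varepsilon_s = \varepsilon_{s+1}$, and $w_I^\varepsilon = -w_I^{\sigma\varepsilon}$ for the transposition $\sigma=(s,s{+}1)$, so the $w_I^\tau$ are no longer independent and your coefficient identification collapses.  The paper treats this case separately (its \lemref{lem:no repeats}): writing $\chi_I = \chi_{\widehat I}\, w_{i_s} w'_{i_s}$ and comparing coefficients of $w_{i_s}w'_{i_s}$, $w_{i_s}w''_{i_s}$, $w'_{i_s}w''_{i_s}$ yields $\alpha(\chi_{\widehat I}) = \delta(\chi_{\widehat I})$, which forces $\chi_{\widehat I}=0$ since the images of $\alpha$ and $\delta$ meet only in $\B\otimes 1\otimes 1$.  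You need to insert an argument of this kind before restricting to strictly increasing $I$.
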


We prove this via a sequence of subsidiary lemmas.  To begin, notice
that $\alpha$, $\beta$, $\gamma$ and $\delta$ do not alter
subscripts of generators of $\land W \otimes \land W$.  Using anti-commutativity, we  may write $\chi$ 
  as a sum of   $\chi_I = \sum_{\varepsilon} a^{\varepsilon}_I w^{\varepsilon}_I$ as defined above the statement of Theorem \ref{LS}. Further,  because the $\chi_I$
in such a sum either satisfy or fail to satisfy the  identity (\ref{eq:alpha beta gamma
delta}) independently of each other,  it is sufficient to show the
result for each $\chi_I$ separately.

First, we eliminate the possibility of any $\chi_I$ that has a
repeated subscript satisfying the identity of the hypothesis. Call
the sequence $I$ the \emph{subscript sequence} of $\chi_I$, and
recall that it is a non-decreasing sequence $I = (i_1, \dots, i_r)$
with $i_1 \leq \cdots \leq i_r$.  Since we are assuming that each
generator of $W$ is of odd degree, and since each $w_i^{\epsilon}$
is either in $1\otimes\land W \otimes1$ or
$1\otimes1\otimes\land W $, any $\chi_I$ may have any one subscript
repeated at most once, that is, at most two consecutive subscripts
$i_s$ and $i_{s+1}$ may agree with each other and, if they do, then
we must have $i_{s-1} < i_s = i_{s+1} < i_{s+2}$.

\begin{lemma}\label{lem:no repeats}
Suppose that
$$\chi_I = \sum_{\varepsilon} a^{\varepsilon}_I w^{\varepsilon}_I,$$
has subscript sequence $I$ of length $r \geq 3$ with a repeated
entry, so that $I = (i_1, \dots, i_s, i_{s+1}, \dots, i_r)$ with
$i_s = i_{s+1}$. If $\chi_I$ satisfies
\begin{equation}\label{eq:basic with repeats}
\alpha(\chi_I) + \beta(\chi_I) = \gamma(\chi_I) + \delta(\chi_I),
\end{equation}
then $\chi_I = 0$.
\end{lemma}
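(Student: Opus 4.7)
The plan is to exploit the repeated subscript to factor $\chi_I$, and then use that $\alpha, \beta, \gamma, \delta$ are algebra maps over $\B$ to convert the identity (\ref{eq:basic with repeats}) into very strong constraints on the quotient factor. The oddness of the generators of $W$ is the crucial ingredient.

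Without loss of generality I will assume the repetition is in positions $1, 2$, so set $j := i_1 = i_2$. Because $w_j$ is odd we have $w_j^2 = (w'_j)^2 = 0$ in $\land W \otimes \land W$, so every nonzero monomial $w_I^\varepsilon$ must satisfy $\varepsilon_1 \neq \varepsilon_2$ and therefore takes the form $\pm (w_j w'_j)\,w_{i_3}^{\varepsilon_3}\cdots w_{i_r}^{\varepsilon_r}$. Collecting terms and absorbing signs into coefficients, I will obtain a factorization
$$\chi_I = (w_j w'_j)\,\nu$$
with $\nu \in \B \otimes \land W \otimes \land W$ a sum of monomials built purely from the remaining subscripts $I' = (i_3, \ldots, i_r)$, namely $\nu = \sum_{\varepsilon'} \nu^{\varepsilon'}\,w^{\varepsilon'}_{I'}$.

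Next I will apply each of $\alpha, \beta, \gamma, \delta$ using the algebra-map property. On the $j$-factor: $\alpha(w_j w'_j) = w_j w'_j$, $\beta(w_j w'_j) = w_j w''_j + w'_j w''_j$, $\gamma(w_j w'_j) = w_j w'_j + w_j w''_j$, and $\delta(w_j w'_j) = w'_j w''_j$. I then decompose $\B \otimes \land W \otimes \land W \otimes \land W$ by the \emph{$j$-pattern}, i.e.\ by which pair among the three tensor slots carries the two copies of $w_j$; oddness of $w_j$ rules out the diagonal patterns, leaving only $\{1,2\}, \{1,3\}, \{2,3\}$. Since $\nu$ and its images under the four maps involve no subscript $j$, these three patterns yield orthogonal contributions and I can equate coefficients pattern by pattern in (\ref{eq:basic with repeats}). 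This gives $\alpha(\nu) = \gamma(\nu)$ from $\{1,2\}$, $\beta(\nu) = \gamma(\nu)$ from $\{1,3\}$, and $\beta(\nu) = \delta(\nu)$ from $\{2,3\}$; altogether, $\alpha(\nu) = \beta(\nu) = \gamma(\nu) = \delta(\nu)$.

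The finishing blow comes from $\alpha(\nu) = \delta(\nu)$: the left side lies in $\B \otimes \land W \otimes \land W \otimes 1$ and the right side lies in $\B \otimes 1 \otimes \land W \otimes \land W$, so the common value must lie in their intersection $\B \otimes 1 \otimes \land W \otimes 1$. This forces $\nu$ to be trivial in both tensor slots of $\land W \otimes \land W$, i.e.\ $\nu \in \B$. But every monomial summand of $\nu$ carries $r - 2 \geq 1$ nontrivial variable factors, so $\nu \in \B$ forces $\nu = 0$, and hence $\chi_I = 0$. The main obstacle I anticipate is carefully bookkeeping the signs in the factorization step (from anticommutation of odd generators in $\land W \otimes \land W$); once this and the $j$-pattern decomposition are set up, the algebra-map property dispatches the rest cheaply.
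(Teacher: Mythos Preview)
Your proposal is correct and follows essentially the same approach as the paper: factor out the repeated pair $w_j w'_j$, use the algebra-map property of $\alpha,\beta,\gamma,\delta$ to split the identity into the three equations $\alpha(\nu)=\gamma(\nu)$, $\beta(\nu)=\gamma(\nu)$, $\beta(\nu)=\delta(\nu)$, and then deduce $\alpha(\nu)=\delta(\nu)$ to force $\nu\in\B$ via an intersection argument. Your final step is marginally slicker than the paper's---you read off both constraints on $\nu$ directly from $\alpha(\nu)=\delta(\nu)\in\B\otimes1\otimes\land W\otimes1$, whereas the paper iterates the intersection argument once more---but the substance is identical.
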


\begin{proof}
We may use the anti-commutativity of $\mathcal{B}\otimes \land W
\otimes \land W$ to write this $\chi_I$ as $\chi_I =
\chi_{\widehat{I}} w_{i_s} w'_{i_s}$,  where
$$\chi_{\widehat{I}} = \sum_{\varepsilon} \widehat{a}_I^{\varepsilon} w^{\varepsilon}_{\widehat{I}},$$
with ${\widehat{I}} = (i_1, \dots, \widehat{i_s}, \widehat{i_s},
\dots, i_r)$ is the subscript sequence of length $r-2$ with the
repeated entries removed.  Notice that we allow for further repeats
in the remaining sequence ${\widehat{I}}$.  Expanding identity
(\ref{eq:basic with repeats}), we obtain
$$\alpha\big(\chi_{\widehat{I}}\big) w_{i_s} w'_{i_s} + \beta\big(\chi_{\widehat{I}}\big) (w_{i_s}
+ w'_{i_s}) w''_{i_s}  = \gamma\big(\chi_{\widehat{I}}\big) w_{i_s}
(w'_{i_s} + w''_{i_s})  + \delta\big(\chi_{\widehat{I}}\big)
w'_{i_s} w''_{i_s}.$$
Bearing in mind that the subscript $i_s$ does not occur in
$\widehat{I}$, we may equate separately the terms from this equation
in $w_{i_s} w'_{i_s}$, $w_{i_s} w''_{i_s}$, and $w'_{i_s}
w''_{i_s}$.  Doing so gives the three equations
$$\alpha\big(\chi_{\widehat{I}}\big) = \gamma\big(\chi_{\widehat{I}}\big), \quad
\beta\big(\chi_{\widehat{I}}\big) =
\gamma\big(\chi_{\widehat{I}}\big), \quad \mathrm{and} \quad
\beta\big(\chi_{\widehat{I}}\big) =
\delta\big(\chi_{\widehat{I}}\big),$$
respectively.  Combining these three, we have that
$\alpha\big(\chi_{\widehat{I}}\big) =
\delta\big(\chi_{\widehat{I}}\big)$, from which it follows that
$\chi_{\widehat{I}} =0$. For $\alpha\big(\chi_{\widehat{I}}\big) \in
\mathcal{B}\otimes \land W \otimes \land W \otimes 1$, whereas
$\delta\big(\chi_{\widehat{I}}\big) \in \mathcal{B}\otimes 1 \otimes
\land W \otimes \land W$.  If they agree, we must have
$\alpha\big(\chi_{\widehat{I}}\big)$ in their intersection, namely
$\mathcal{B}\otimes 1 \otimes \land W \otimes 1$, and hence
$\chi_{\widehat{I}} \in \mathcal{B}\otimes 1 \otimes \land W$. This
in turn implies that $\delta\big(\chi_{\widehat{I}}\big) \in
\mathcal{B}\otimes 1 \otimes 1 \otimes \land W$.  Once again, since
$\alpha\big(\chi_{\widehat{I}}\big) =
\delta\big(\chi_{\widehat{I}}\big)$, then we must have
$\alpha\big(\chi_{\widehat{I}}\big)$ in the intersection, namely
$\mathcal{B}\otimes 1 \otimes 1 \otimes 1$. So, in fact, we have
$\chi_{\widehat{I}} \in \mathcal{B}\otimes 1 \otimes 1$. But
$\chi_{\widehat{I}}$ has indexing sequence of length at least $1$,
and it follows that $\chi_{\widehat{I}} = 0$.
\end{proof}

\begin{remark}
Notice that the restriction $r\geq 3$ is necessary in
\propref{prop:alpha beta gamma delta} and \lemref{lem:no repeats}.
Indeed, any term of the form $\chi = a w_i w'_j$, in which $r=2$ and
$i \leq j$ or $i > j$, satisfies the identity $\alpha(\chi) +
\beta(\chi) = \gamma(\chi) + \delta(\chi)$, as may easily be
checked.
\end{remark}

When combined with the remainder of the argument, the preceding
result justifies the last clause of the statement of
\propref{prop:alpha beta gamma delta}.  The remaining arguments take
up the main point, with $I = (i_1, i_2, \dots , i_r)$ and $i_1 < i_2
< \cdots < i_r$.

\begin{lemma}\label{lem:beta=gamma}
Fix $I = (i_1, i_2, \dots , i_r)$ with $i_1 < i_2 < \cdots < i_r$
and $r \geq 1$.
\begin{itemize}
\item[(A)] If
\begin{equation}\label{eq:beta=gamma}
\beta(\chi_I) = \gamma(\chi_I),
\end{equation}
then $\chi_I =  a S_I$, where $a \in \mathcal{B}$ and $S_I =
(w_{i_1} + w'_{i_1}) \cdots (w_{i_r} + w'_{i_r})$.

\item[(B)] If $\beta(\chi_I) = 0$, then
$\chi_I = 0$.

\item[(C)] If $\beta(\chi_I) = \delta(\chi_I)$, then
$\chi_I \in \mathcal{B}\otimes 1 \otimes \land W$.
\end{itemize}
\end{lemma}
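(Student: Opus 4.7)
The plan is to expand $\beta(\chi_I)$, $\gamma(\chi_I)$, and $\delta(\chi_I)$ in a canonical $\B$-basis of $\B \otimes \land W \otimes \land W \otimes \land W$ and extract the three conclusions by comparing coefficients. Because $I = (i_1, \dots, i_r)$ is strictly increasing, the generators $w_{i_1}, \dots, w_{i_r}$ are distinct, so the monomials $z^{(T_1, T_2, T_3)}_I$---defined by placing $w_{i_j}$ in the first, second, or third copy of $\land W$ according as $j$ lies in $T_1$, $T_2$, or $T_3$, kept in the increasing order of $j$---form a $\B$-linearly independent family as $(T_1, T_2, T_3)$ ranges over ordered tripartitions of $\{1,\dots,r\}$. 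Since $\beta$, $\gamma$, $\delta$ are algebra maps acting factor-by-factor on the monomials $w^\varepsilon_I$ written in this same order, no signs are introduced in their expansions.

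From the formulas $\beta(w_i) = w_i + w'_i$, $\beta(w'_i) = w''_i$, $\gamma(w_i) = w_i$, $\gamma(w'_i) = w'_i + w''_i$, $\delta(w_i) = w'_i$, $\delta(w'_i) = w''_i$, a direct expansion yields that the coefficient of $z^{(T_1, T_2, T_3)}_I$ equals $a^{\varepsilon(T_1 \cup T_2,\,T_3)}_I$ in $\beta(\chi_I)$, equals $a^{\varepsilon(T_1,\,T_2 \cup T_3)}_I$ in $\gamma(\chi_I)$, and in $\delta(\chi_I)$ equals $a^{\varepsilon(T_2,\,T_3)}_I$ when $T_1 = \emptyset$ and is zero otherwise. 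Here $\varepsilon(A,B)$ denotes the binary $r$-tuple with zeros on $A$ and ones on $B$, and $S_0(\varepsilon), S_1(\varepsilon)$ denote the sets of indices where $\varepsilon$ takes value $0$ and $1$ respectively.

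Each conclusion then follows by an elementary extraction. For (A), equality of the $\beta$- and $\gamma$-coefficients gives $a^{\varepsilon(T_1 \cup T_2,\,T_3)}_I = a^{\varepsilon(T_1,\,T_2 \cup T_3)}_I$ for every tripartition; taking $T_2$ to be a single element shows that flipping any single $0$-entry of $\varepsilon$ to a $1$ preserves $a^\varepsilon_I$, and since any two binary tuples are dominated by their bitwise join, all $a^\varepsilon_I$ equal a common $a \in \B$, so $\chi_I = a\sum_\varepsilon w^\varepsilon_I = a S_I$. For (B), $\beta(\chi_I) = 0$ immediately forces $a^{\varepsilon(T_1 \cup T_2,\, T_3)}_I = 0$ for every tripartition, and taking $T_2 = \emptyset$ realises every $\varepsilon$ as $\varepsilon(T_1, T_3)$, so $\chi_I = 0$. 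For (C), matching $\beta$ against $\delta$ forces $a^{\varepsilon(T_1 \cup T_2,\,T_3)}_I = 0$ whenever $T_1 \neq \emptyset$, which kills every $\varepsilon$ with $S_0(\varepsilon) \neq \emptyset$; only $a^{(1,\dots,1)}_I$ can survive, and the corresponding monomial $w'_I$ lies in $\B \otimes 1 \otimes \land W$. The only point that requires care is verifying that the left-to-right, factor-by-factor expansion really does make each tripartition class appear with coefficient exactly $+1$; once that is in place no deep obstacle remains, and the three statements reduce to the combinatorial readings just described.
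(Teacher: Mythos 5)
Your proof is correct, but it takes a genuinely different route from the paper's. The paper argues by induction on $r$: it peels off the top generator, writing $\chi_I = \chi_1 w_{i_r} + \chi_2 w'_{i_r}$, uses the fact that $i_r$ is strictly the largest index to equate coefficients of $w_{i_r}$, $w'_{i_r}$, $w''_{i_r}$ separately, and thereby reduces to the same identities for the shorter elements $\chi_1,\chi_2$; parts (B) and (C) are dispatched as ``entirely analogous,'' with (B) feeding the induction step of (C). You instead expand once and for all in the basis of $\B\otimes\land W\otimes\land W\otimes\land W$ indexed by ordered tripartitions $(T_1,T_2,T_3)$ of $\{1,\dots,r\}$, read off the coefficient of each basis monomial under $\beta$, $\gamma$, $\delta$ (namely $a_I^{\varepsilon(T_1\cup T_2,\,T_3)}$, $a_I^{\varepsilon(T_1,\,T_2\cup T_3)}$, and $a_I^{\varepsilon(T_2,T_3)}$ supported on $T_1=\emptyset$), and extract all three conclusions by pure combinatorics on binary tuples: the singleton-$T_2$ flip argument for (A), the $T_2=\emptyset$ specialization for (B), and the $T_1\neq\emptyset$ vanishing for (C). Your sign discussion (factor-by-factor expansion of monomials kept in increasing index order, so every tripartition monomial appears with coefficient $+1$) correctly addresses the one point where the global expansion could go wrong, and your reading of $\chi_I$ as ranging over all binary $\varepsilon$ (including the pure terms $w_I$, $w'_I$) matches what the lemma's conclusion $\chi_I=aS_I$ requires. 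What each approach buys: the paper's induction is uniform with the proofs of Lemma 3.8 and Proposition 3.5, where the four maps are mixed and a one-step coefficient reading is less immediate; your argument is non-inductive, handles (A), (B), (C) in a single framework, and makes explicit the two parts the paper leaves to the reader.
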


\begin{proof}
(A) We argue by induction on $r$.  Induction starts with $r=1$,
where we have $\chi_I = a w_{i_1} + a' w'_{i_1}$, with $a, a' \in
\mathcal{B}$. We compute that $\beta(\chi_I) = a w_{i_1} + a
w'_{i_1} + a'w''_{i_1}$, and $\gamma(\chi_I) = a w_{i_1} + a'
w'_{i_1} + a'w''_{i_1}$.  Then (\ref{eq:beta=gamma}) implies that $a
= a'$, and we have $\chi_I = a S_{i_1}$.

For the induction step, with $r\geq 2$, write $\chi_I = \chi_1
w_{i_r} + \chi_2 w'_{i_r}$, with $\chi_1$ and $\chi_2$ of length
$r-1$, and expand (\ref{eq:beta=gamma}) as
$$\beta(\chi_1) w_{i_r} + \beta(\chi_1) w'_{i_r} + \beta(\chi_2)
w''_{i_r} = \gamma(\chi_1) w_{i_r} + \gamma(\chi_2) w'_{i_r} +
\gamma(\chi_2) w''_{i_r}.$$
Since the subscript $i_r$ is strictly the highest that occurs in
$I$, we may equate separately the terms from this equation in
$w_{i_r}$, $w'_{i_r}$, and $w''_{i_r}$. Doing so gives the three
equations
$$\beta(\chi_1) = \gamma(\chi_1), \quad
\beta(\chi_1) = \gamma(\chi_2), \quad \mathrm{and} \quad
\beta(\chi_2) = \gamma(\chi_2),$$
respectively. The first and third of these, with the induction
hypothesis, yield $\chi_1 = a S_{\widehat{I}}$ and $\chi_2 = a'
S_{\widehat{I}}$, where $\widehat{I} = (i_1, \dots, i_{r-1})$. Since
$\beta(S_I) = \gamma(S_I)$, the middle of the three equations
consequently implies $a=a'$.  Hence $\chi_I = a S_{\widehat{I}}
w_{i_r} + a S_{\widehat{I}} w'_{i_r} = a S_{\widehat{I}}(w_{i_r} +
w'_{i_r}) = a S_{I}$.  Induction is complete and the result follows.

Parts (B) and (C) are proved by entirely analogous arguments.  Part
(B) is used in establishing the induction step of the proof of part
(C).
\end{proof}

\begin{lemma}\label{lem:alpha beta gamma}
Fix $I = (i_1, i_2, \dots , i_r)$ with $i_1 < i_2 < \cdots < i_r$
and $r \geq 1$.
\begin{itemize}
\item[(A)] If
\begin{equation}\label{eq:alpha + beta=gamma}
\alpha(\chi_I) + \beta(\chi_I) = \gamma(\chi_I),
\end{equation}
then $\chi_I =  a (S_I - w_I)$, where $a \in \mathcal{B}$, $S_I =
(w_{i_1} + w'_{i_1}) \cdots (w_{i_r} + w'_{i_r})$, and $w_I =
w_{i_1} \cdots w_{i_r}$.

\item[(B)] If $\beta(\chi_I) = \gamma(\chi_I) + \delta(\chi_I)$,
then $\chi_I =  a (S_I - w'_I)$.
\end{itemize}
\end{lemma}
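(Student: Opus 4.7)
The plan is to prove both parts of \lemref{lem:alpha beta gamma} by induction on $r$, mirroring the inductive structure already used for \lemref{lem:beta=gamma}. The base case $r=1$ in each part is a direct calculation: writing $\chi_I = a w_{i_1} + a' w'_{i_1}$, applying the relevant maps, and equating coefficients of $w_{i_1}$, $w'_{i_1}$, $w''_{i_1}$ forces one of $a$ or $a'$ to vanish, leaving exactly the predicted form $a(S_{i_1} - w_{i_1})$ in part (A) or $a(S_{i_1} - w'_{i_1})$ in part (B).

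For the induction step of part (A), I would write $\chi_I = \chi_1 w_{i_r} + \chi_2 w'_{i_r}$ with $\chi_1, \chi_2$ of length $r-1$ in the subscripts $\widehat{I} = (i_1, \dots, i_{r-1})$. Since $i_r$ is strictly larger than every subscript in $\chi_1$ or $\chi_2$, the equation $\alpha(\chi_I) + \beta(\chi_I) = \gamma(\chi_I)$ separates into the three identities
\[
\alpha(\chi_1) + \beta(\chi_1) = \gamma(\chi_1), \quad \alpha(\chi_2) + \beta(\chi_1) = \gamma(\chi_2), \quad \beta(\chi_2) = \gamma(\chi_2),
\]
obtained by collecting the coefficients of $w_{i_r}$, $w'_{i_r}$, $w''_{i_r}$ respectively. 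The first gives $\chi_1 = a(S_{\widehat{I}} - w_{\widehat{I}})$ by induction, and the third gives $\chi_2 = a' S_{\widehat{I}}$ by \lemref{lem:beta=gamma}(A). Substituting into the middle identity, and using the trivial identities $\beta(w_{\widehat{I}}) = S_{\widehat{I}}$ and $\beta(S_{\widehat{I}}) = \gamma(S_{\widehat{I}})$, the middle equation reduces to $(a'-a)\bigl(S_{\widehat{I}} - \gamma(S_{\widehat{I}})\bigr) = 0$. Since $\gamma(S_{\widehat{I}}) = \prod_j (w_{i_j} + w'_{i_j} + w''_{i_j})$ differs from $S_{\widehat{I}}$ whenever $r-1 \geq 1$, the factor is a nonzero element of the integral domain $\land W \otimes \land W \otimes \land W$, so $a = a'$. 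Reassembling, $\chi_I = a S_{\widehat{I}}(w_{i_r} + w'_{i_r}) - a w_{\widehat{I}} w_{i_r} = a(S_I - w_I)$.

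Part (B) proceeds by the same split: the first and third sub-equations become $\beta(\chi_1) = \gamma(\chi_1)$ and $\beta(\chi_2) = \gamma(\chi_2) + \delta(\chi_2)$, which yield $\chi_1 = a S_{\widehat{I}}$ by \lemref{lem:beta=gamma}(A) and $\chi_2 = a'(S_{\widehat{I}} - w'_{\widehat{I}})$ by induction on (B) itself. The middle equation $\beta(\chi_1) = \gamma(\chi_2) + \delta(\chi_1)$, together with the observation that $\delta(S_{\widehat{I}}) = \gamma(w'_{\widehat{I}}) = \prod_j (w'_{i_j} + w''_{i_j})$, collapses to $(a - a')\bigl(\gamma(S_{\widehat{I}}) - \delta(S_{\widehat{I}})\bigr) = 0$, again forcing $a = a'$ and giving $\chi_I = a(S_I - w'_I)$.

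The only delicate point I anticipate is the slot bookkeeping: one must confirm that splitting off the largest subscript $i_r$ really does let the three coefficient equations be read off independently, and that the algebra maps $\alpha, \beta, \gamma, \delta$ all commute with this factorization in a way consistent with the conventions made for $w_I^{\varepsilon}$. Once that is verified, the two parts reduce to symbolic identities among $S_I$, $w_I$, $w'_I$ that follow directly from the definitions, and the induction hypotheses feed in cleanly — with part (B) calling on part (A)'s ``parallel'' \lemref{lem:beta=gamma}(A) and vice versa.
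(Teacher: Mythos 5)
Your proof is correct and follows essentially the same route as the paper: induction on $r$, splitting off the largest subscript $i_r$ to obtain three coefficient equations, feeding the outer two into \lemref{lem:beta=gamma} and the induction hypothesis, and using the middle one to force the two coefficients to agree. One cosmetic point: $\land W \otimes \land W \otimes \land W$ is an exterior algebra, hence not an integral domain, but your cancellation still holds because $(a-a')\,\xi = 0$ in $\B \otimes (\land W)^{\otimes 3}$ with $\xi$ a fixed nonzero element of the vector space $(\land W)^{\otimes 3}$ forces $a = a'$ --- equivalently, equate coefficients of a monomial such as $w''_{\widehat{I}}$, which occurs in $\gamma(S_{\widehat{I}})$ but not in $\alpha(S_{\widehat{I}})$.
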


\begin{proof}
(A) We argue by induction on $r$.  Induction starts with $r=1$,
where we have $\chi_I = a w_{i_1} + a' w'_{i_1}$.  We compute that
$\alpha(\chi_I) + \beta(\chi_I) = a w_{i_1} + a' w'_{i_1} + a
w_{i_1} + a w'_{i_1} + a'w''_{i_1}$, and $\gamma(\chi_I) = a w_{i_1}
+ a' w'_{i_1} + a'w''_{i_1}$.  Then (\ref{eq:alpha + beta=gamma})
implies that $a = 0$, and we have $\chi_I = a'w'_{i_1} =  a'(
S_{i_1} - w_{i_1})$ as desired.

For the induction step, with $r\geq 2$, write $\chi_I = \chi_1
w_{i_r} + \chi_2 w'_{i_r}$, with $\chi_1$ and $\chi_2$ of length
$r-1$, and expand (\ref{eq:alpha + beta=gamma}) as
$$\alpha(\chi_1)
w_{i_r} + \alpha(\chi_2) w'_{i_r} + \beta(\chi_1) w_{i_r} +
\beta(\chi_1) w'_{i_r} + \beta(\chi_2) w''_{i_r} = \gamma(\chi_1)
w_{i_r} + \gamma(\chi_2) w'_{i_r} + \gamma(\chi_2) w''_{i_r}.$$
Since the subscript $i_r$ is strictly the highest that occurs in
$I$, we may equate separately the terms from this equation in
$w_{i_r}$, $w'_{i_r}$, and $w''_{i_r}$. Doing so gives the three
equations
$$\alpha(\chi_1) + \beta(\chi_1) = \gamma(\chi_1), \quad
\alpha(\chi_2) + \beta(\chi_1) = \gamma(\chi_2), \quad \mathrm{and}
\quad \beta(\chi_2) = \gamma(\chi_2),$$
respectively. The third of these, with \lemref{lem:beta=gamma},
yields $\chi_2 = a S_{\widehat{I}}$, where $\widehat{I} = (i_1,
\dots, i_{r-1})$. The first, with the induction hypothesis, yields
$\chi_1 = b (S_{\widehat{I}} - w_{\widehat{I}})$. Consequently, the
middle of these three equations yields
$$a \alpha(S_{\widehat{I}}) + b \beta(S_{\widehat{I}}) - b \beta(w_{\widehat{I}})
= a \gamma(S_{\widehat{I}}).$$
Write $T_i$ for the trinomial $w_i + w'_i + w''_i \in \mathcal(B)
\otimes \land W \otimes \land W \otimes \land W$, and extend this
notation as we have for the binomials to write $T_I = T_{i_1} \cdots
T_{i_r}$.  Observe that we have $\beta(w_{\widehat{I}}) =
S_{\widehat{I}}$, whereas $\beta(S_{\widehat{I}}) =
\gamma(S_{\widehat{I}}) = T_{\widehat{I}}$.  The previous equation,
therefore, may be written as
$$(a - b) \alpha(S_{\widehat{I}}) = (a-b)T_{\widehat{I}}.$$
By equating coefficients in $\delta(w_{\widehat{I}})$, for instance,
we see that $a = b$.   Therefore, we have $\chi_I = b
(S_{\widehat{I}} - w_{\widehat{I}}) w_{i_r} + b S_{\widehat{I}}
w'_{i_r} = b (S_I - w_I)$ as desired. Induction is complete and the
result follows.

A similar argument establishes (B).
\end{proof}

\begin{proof}[Proof of \propref{prop:alpha beta gamma delta}]
By \lemref{lem:no repeats}, we may assume that the subscript
sequence $I = (i_1, \dots, i_r)$ is strictly increasing. Suppose
$r\geq 3$, write $\chi_I = \chi_1 w_{i_r} + \chi_2 w'_{i_r}$, with
$\chi_1$ and $\chi_2$ of length $r-1 \geq 2$, and expand
(\ref{eq:alpha beta gamma delta}) as
$$\begin{aligned}
\alpha(\chi_1) w_{i_r} + \alpha(\chi_2) w'_{i_r} + &\beta(\chi_1)
w_{i_r} + \beta(\chi_1) w'_{i_r} + \beta(\chi_2)
w''_{i_r}\\
& = \gamma(\chi_1) w_{i_r} + \gamma(\chi_2) w'_{i_r} +
\gamma(\chi_2) w''_{i_r} + \delta(\chi_1) w'_{i_r} + \delta(\chi_2)
w''_{i_r}.
\end{aligned}
$$
Since the subscript $i_r$ is strictly the highest that occurs in
$I$, we may equate separately the terms from this equation in
$w_{i_r}$, $w'_{i_r}$, and $w''_{i_r}$. Doing so gives the three
equations
$$\begin{aligned}
\alpha(\chi_1) + \beta(\chi_1) &= \gamma(\chi_1), \\
\alpha(\chi_2) + \beta(\chi_1) &= \gamma(\chi_2) + \delta(\chi_1),\\
\beta(\chi_2) &= \gamma(\chi_2) + \delta(\chi_2),
\end{aligned}
$$
respectively. The first and third of these, with \lemref{lem:alpha
beta gamma}, yield $\chi_1 = a (S_{\widehat{I}} - w_{\widehat{I}})$
and $\chi_2 = a' (S_{\widehat{I}} - w'_{\widehat{I}})$, where
$\widehat{I} = (i_1, \dots, i_{r-1})$ and $a, a' \in \mathcal{B}$.
Consequently, the middle of these three equations yields
$$
\begin{aligned}
a' \alpha(S_{\widehat{I}}) - a' \alpha(w'_{\widehat{I}}) &+ a \beta(S_{\widehat{I}}) - a \beta(w_{\widehat{I}}) \\
 &= a' \gamma(S_{\widehat{I}}) - a' \gamma(w'_{\widehat{I}}) + a \delta(S_{\widehat{I}}) - a \delta(w_{\widehat{I}}).
\end{aligned}
$$
We may rewrite this, substituting $\alpha(S_{\widehat{I}}) =
\beta(w_{\widehat{I}})$, $\alpha(w'_{\widehat{I}}) =
\delta(w_{\widehat{I}})$, $\gamma(S_{\widehat{I}}) =
\beta(S_{\widehat{I}})$, and $\gamma(w'_{\widehat{I}}) =
\delta(S_{\widehat{I}})$, as
$$
\begin{aligned}
a' \beta(w_{\widehat{I}}) - a' \delta(w_{\widehat{I}}) &+ a \beta(S_{\widehat{I}}) - a \beta(w_{\widehat{I}}) \\
 &= a' \beta(S_{\widehat{I}}) - a' \delta(S_{\widehat{I}}) + a \delta(S_{\widehat{I}}) - a \delta(w_{\widehat{I}}).
\end{aligned}
$$
Now this may be re-arranged to the equation
$$
\beta\big( (a-a')( S_{\widehat{I}} - w_{\widehat{I}})\big) =
\delta\big( (a-a')( S_{\widehat{I}} - w_{\widehat{I}})\big).
$$
Part (C) of \lemref{lem:beta=gamma} now implies that $(a-a')(
S_{\widehat{I}} - w_{\widehat{I}}) \in \mathcal{B}\otimes 1 \otimes
\land W$. But since $\widehat{I}$ is of length $\geq 2$, we cannot
have $S_{\widehat{I}} - w_{\widehat{I}} \in \mathcal{B}\otimes 1
\otimes \land W$.  Hence we must have $a = a' = b$, say, and thus
$\chi_I = b (S_{\widehat{I}} - w_{\widehat{I}}) w_{i_r} + b
(S_{\widehat{I}} - w'_{\widehat{I}}) w'_{i_r} = b (S_{\widehat{I}} -
w_{\widehat{I}} - w'_{\widehat{I}})$ as desired.
\end{proof}

We can now deduce our  second main result. 

\begin{proof}[Proof of Theorem \ref{main1}]
Let $X$ be a fibrewise associative H-space satisfying the conditions in Theorem \ref{main1}.   Then the relative   model $\B \to \B \otimes \land W$ for $X \to B$  satisfies the conditions of Theorem \ref{LS}. By that result,  the fibrewise multiplication of models $C$ induced by the fibrewise multiplication  $m$ is equivalent to $C_0$. By Theorem \ref{eqtrivial}, $X$ is rationally fibrewise H-trivial.  \end{proof}

\section{Rational H-triviality of some Fibrewise groups} \label{sec:applications}
 We show how Theorem \ref{main1} may be applied  to   a  standard construction of fibrewise groups (see \cite[p.14]{CJ}).   
 Let $H$ be a  homotopy finite  topological group and  $p \colon E \to B$   a principal $H$-bundle.  We assume   $p$ is a pull-back of the universal principal $H$-bundle $EH \to BH$ by a map $h \colon B \to BH.$     
 Suppose $H$ acts    on  another homotopy finite     group $G$ via   a continuous homomorphism $\alpha \colon H \to \mathrm{Aut}(G).$ Then $H$ acts on   $E \times G$ by the  diagonal action and we obtain a fibrewise group   $$ q \colon E \times_H G \to B$$ with fibre $G$. 
 Here   $E \times_H G$ is  the quotient of $E \times G$ by the  action and $q$ is induced from $p$.  
 The canonical example    is the adjoint bundle $\Ad(p) \colon E \times_G G \to B$   with $\alpha = \Ad_G \colon G \to \mathrm{Aut}(G)$ the adjoint action.   
 
 Given an action $\alpha \colon G \to \mathrm{Aut}(H)$,  we may perform the same construction  on the universal $H$-bundle to obtain  a fibrewise group  $EH \times_H G$  over  $BH$. Notice that the fibrewise group $E \times_H G$ over $B$ may be taken to be the pull-back by $h$ of the fibrewise group $EH \times_H G$ over $BH.$  

\begin{theorem} \label{thm:app}  Let $H$ and $G$ be homotopy finite topological groups, $p \colon E \to B$ a principal $H$-bundle and $H \to \mathrm{Aut}(G)$ a continuous homomorphism.  Suppose $p$ is a pull-back of the universal $H$-bundle $EH \to BH$ and that    $EH \times_H G$ is a nilpotent space.  Then the fibrewise group $E \times_G H$ over $B$ is rationally fibrewise H-trivial.  
\end{theorem}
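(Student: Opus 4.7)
The strategy is to reduce to the universal case and apply \thmref{main1}. The key observation is that, since $p \colon E \to B$ is the pull-back along $h \colon B \to BH$ of the universal principal $H$-bundle $EH \to BH$, the fibrewise group $q \colon E \times_H G \to B$ is the pull-back along $h$ of the universal fibrewise group $q_H \colon EH \times_H G \to BH$. So if we can establish the conclusion for the universal example, the conclusion for $q$ should follow by naturality of fibrewise rationalization under pull-back.

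First I would verify that $q_H \colon EH \times_H G \to BH$ satisfies the hypotheses of \thmref{main1}. It is a fibrewise group, hence a fibrewise homotopy associative H-space. Since $H$ is a connected homotopy finite topological group, Hopf's theorem applied to $H$ gives $H^*(H;\Q) \cong \land W_H$ with $W_H$ oddly graded of finite type, so that $H^*(BH;\Q)$ is a polynomial algebra on even-degree generators; in particular, $BH$ is simply connected, of finite type, and has zero odd-dimensional rational cohomology. Similarly, Hopf's theorem for the connected homotopy finite group $G$ gives $\pi_*(G) \otimes \Q$ concentrated in odd degrees. Nilpotence of $EH \times_H G$ is part of the hypothesis, and homotopy finiteness of $G$ gives the required finiteness condition on the fibre. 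Applying \thmref{main1} then yields that $EH \times_H G$ is rationally fibrewise H-trivial over $BH$; that is, there is a fibrewise H-equivalence $(EH \times_H G)_\rat \simeq^{BH}_{BH} BH \times K$ for some H-space $K$ which is a product of rational Eilenberg-Mac\,Lane spaces.

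The remaining step is to deduce the same conclusion for the pull-back $E \times_H G \to B$. For this, I would invoke the naturality of fibrewise rationalization under pull-back of the base. The universal property of fibrewise rationalization discussed in \secref{sec:prelim} (following \cite{May}, \cite{L}) identifies $(h^*(EH \times_H G))_\rat$ with $h^*((EH \times_H G)_\rat)$ as fibrewise spaces over $B$, and similarly for the induced fibrewise multiplications; here one uses that $h$ rationalizes to the corresponding map $B_\Q \to BH_\Q$ and that the constructions of $X_\rat$ and of $m_\rat$ described in \secref{sec:prelim} are pulled back from their counterparts over $B_\Q$ in a functorial way. Pulling back the fibrewise H-equivalence $(EH \times_H G)_\rat \simeq^{BH}_{BH} BH \times K$ along $h$ then yields a fibrewise H-equivalence $(E \times_H G)_\rat \simeq^{B}_{B} B \times K$, as required.

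The main obstacle I expect is the final naturality step: to make the pull-back argument rigorous one must check that fibrewise rationalization, the induced fibrewise multiplication $m_\rat$, and the notion of fibrewise H-triviality are all preserved by pulling back the base along $h \colon B \to BH$. This amounts to unwinding the universal property of the fibrewise rationalization (and its fibrewise H-structure) from \secref{sec:prelim}, but no new algebraic input beyond \thmref{main1} is needed.
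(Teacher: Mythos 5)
Your proposal is correct and follows essentially the same route as the paper: apply \thmref{main1} to the universal fibrewise group $EH \times_H G$ over $BH$ (using that $H^*(BH;\Q)$ is a polynomial algebra on even generators, the nilpotence hypothesis, and homotopy finiteness of $G$), then transfer the conclusion to $E \times_H G$ by noting that uniqueness of fibrewise localization makes fibrewise H-triviality stable under pull-back. Your write-up simply spells out in more detail the naturality step that the paper dispatches in one sentence.
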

\begin{proof} 
Since   $H$ is of the homotopy type of finite CW complex,     $H^*(BH; \Q)$ is a polynomial algebra on even generators.  Since the other conditions of Theorem \ref{main1} hold by hypothesis,  we obtain that the fibrewise group $ EH \times_H G$ over $BH$ is    rationally fibrewise H-trivial.  Now observe that the uniqueness of fibrewise localization    implies    fibrewise H-triviality is   preserved by pull-backs.  The result follows.
\end{proof} 
 When $G$ and $H$ are simply connected,   the nilpotence of the space $EH \times_H G$ in  Theorem \ref{thm:app} is automatic.    We observe that  Theorem \ref{thm:app} applies without this restriction in the special case of the adjoint bundle.  The following result contains \cite[Prop.2.2]{CS}.  
 
 \begin{theorem}  \label{thm:gauge} Let $G$ be a homotopy finite topological group and  $ p \colon E \to B$ a principal $G$-bundle.  Suppose $p$ is a pull-back of the universal $G$-bundle $EG \to BG$.  Then the adjoint bundle $\Ad(p)$  over $B$ is  rationally fibrewise H-trivial. 
 \end{theorem}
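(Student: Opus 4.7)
The plan is to reduce to the universal adjoint bundle and invoke \thmref{thm:app}. Since $\Ad(p)$ is the pullback of the universal adjoint bundle $\Ad(p_G)\colon EG\times_G G\to BG$ along the classifying map $h\colon B\to BG$, and rational fibrewise H-triviality is preserved under pullback (a point already used in the proof of \thmref{thm:app}), it suffices to handle the universal case $\Ad(p_G)$.

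For $\Ad(p_G)$, applied with $H = G$ and $\alpha = \Ad$, two of the three hypotheses of \thmref{thm:app} are immediate: $G$ is a homotopy finite topological group, and $EG \to BG$ is tautologically a pullback of the universal $G$-bundle. The remaining hypothesis---nilpotence of $X := EG \times_G G$ as a space---is not assumed in the statement of \thmref{thm:gauge}, and verifying it for the adjoint bundle is the main obstacle of the proof.

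For the verification the plan is as follows. Passing to the identity component $G^0$ and exploiting naturality with respect to the pullback $BG^0 \to BG$, one may reduce to the case $G$ connected; then $BG$ is simply connected. The identity section $\sigma\colon BG \to X$, $\sigma([x]) = [x,e]$, splits the long exact sequence of the fibration $G \to X \to BG$, yielding $\pi_n(X)\cong \pi_n(G)\oplus\pi_n(BG)$ for all $n\geq 1$, with $\pi_1(X)\cong \pi_1(G)$ abelian. The $\pi_1(X)$-action on $\pi_n(X)$ is then controlled summand-wise: it is trivial on the $\pi_n(G)$-summand, since the fundamental group of an H-space acts trivially on its own higher homotopy groups; and it is trivial on the $\sigma_*\pi_n(BG)$-summand, using the fibrewise group inverse in $X$ together with the contractibility of $EG$ to deform any representative $n$-sphere past a loop in the fibre $G$. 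Hence $X$ is nilpotent.

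With nilpotence established, \thmref{thm:app} applies directly and yields that $\Ad(p_G)$ is rationally fibrewise H-trivial; pulling back along $h$ completes the proof for $\Ad(p)$. I expect the only genuinely delicate point is the triviality of the $\pi_1(X)$-action on the $\sigma_*\pi_n(BG)$-summand, which relies essentially on having both the fibrewise group structure and a global section, and which distinguishes the adjoint case from the general setup of \thmref{thm:app}.
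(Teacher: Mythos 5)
Your overall architecture is exactly the paper's: reduce to the universal adjoint bundle by pulling back along the classifying map, verify that $X = EG\times_G G$ is nilpotent, and invoke \thmref{thm:app}. The paper disposes of the nilpotence step differently and much more quickly: it identifies $EG\times_G G \simeq \map(S^1, BG)$ (citing \cite[Lem.A.1]{KSS}) and then quotes \cite[Th.II.3.11]{HMR} on the nilpotence of mapping spaces out of a finite complex. So the only real divergence is your hands-on verification of nilpotence, and that is where there is a genuine gap.

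The problematic step is the claimed \emph{triviality} of the $\pi_1(X)$-action on the $\sigma_*\pi_n(BG)$-summand. The sketch offered (``fibrewise group inverse plus contractibility of $EG$'') is not an argument, and the claim itself is false in the generality in which your mechanism operates. A fibrewise group with identity section over a simply connected base, built from a contractible principal object, is exactly what the free loop fibration $\Lambda Y \to Y$ is; and there the element $\gamma\in\pi_1(\Lambda Y)\cong\pi_2(Y)$ acts on $\sigma_*\xi$ by $\sigma_*\xi \pm j_*[\gamma,\xi]$, a Whitehead-product correction landing in the fibre summand. For $Y=S^2$ (the paper's own \exref{ex: non-Hopf}) this correction is nonzero, so $\Lambda S^2$ is nilpotent but not simple and the summand $\sigma_*\pi_n(Y)$ is not even $\pi_1$-stable. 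Since $EG\times_G G\simeq \Lambda BG$, the relevant obstructions are Samelson products $\langle \pi_1(G),\pi_*(G)\rangle$, which you have given no reason to vanish. The good news is that you do not need triviality, only nilpotence, and your argument repairs itself if you replace the direct-sum decomposition by the $\pi_1(X)$-stable filtration $i_*\pi_n(G)\subseteq \pi_n(X)$: the action is trivial on $i_*\pi_n(G)$ (your first half, which is correct, using that $\pi_1(X)$ is the image of $\pi_1(G)$ and that the H-space $G$ is simple), and it is trivial on the quotient $\pi_n(X)/i_*\pi_n(G)\hookrightarrow \pi_n(BG)$ simply by naturality of the action under $p_*$ together with $\pi_1(BG)=0$; hence the action is unipotent and $X$ is nilpotent. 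Separately, your reduction to connected $G$ via $BG^0\to BG$ is not justified: the pulled-back bundle still has fibre $G$ rather than $G^0$, and nilpotence does not descend along coverings; if you want to treat disconnected $G$ you need a different device (the paper's route through $\map(S^1,BG)$ also quietly requires $BG$ to be nilpotent, so this point deserves explicit care either way).
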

 \begin{proof}
It suffices to prove $EG \times_G G$ is a nilpotent space where $G$ acts on itself via the adjoint action.  We  here have the identification $ EG \times_G G \simeq \map(S^1, BG)$  (cf. \cite[Lem.A.1]{KSS}). Since $S^1$ is a finite CW complex,  nilpotence is given  by  \cite[Th.II.3.11]{HMR}.     
 \end{proof}

We give one further example  based on work in   \cite{KSS}. 
 
 \begin{example} \label{ex:univ}
Let  $G$ be a compact Lie group.  Let  $P(G) = G/\mathcal{Z}(G)$ be the projectification.  Suppose given  a principal $P(G)$-bundle $p \colon E \to B$  which is the pull-back of  the universal $P(G)$-bundle.  Taking $P(G) \to \mathrm{Aut}(G)$ to be the induced adjoint action, we obtain 
 $$\Pad(p) = E \times_{P(G)} G \to B,$$
 the {\em projective adjoint bundle}.   The universal example is of the form $EP(G) \times_G G\to BP(G)$. By  \cite[Lem.5.11]{KSS}, $EP(G) \times_G G$ is a nilpotent space.    Theorem \ref{thm:app} thus applies to show   $$\hbox{$\Pad(p)$ is rationally fibrewise H-trivial.}$$      \end{example}

Finally,  given a fibrewise H-space $X$ over $B,$ let  $\Gamma(X)$ denote the space of sections of $X$  and $\Gamma(X)_\circ$ the path-component of the basepoint $\sigma$ in $\Gamma(X)$.  Then $\Gamma(X)_\circ$ is a connected H-space with point-wise multiplication of sections.     
Taking $X = E \times_G G$  to be the adjoint bundle over $B$ corresponding to a principal $G$-bundle $p \colon E \to B$, we have the identity $\G(p) \cong \Gamma(X)$  where $\G(p)$  is the {\em gauge group}  of the bundle \cite[Eq.2.2]{AB}.  
 
 \begin{theorem} \label{thm:sec} Let    $X$ be a fibrewise H-space with $B$ and  $G$   homotopy finite. Suppose $X$ is rationally fibrewise H-trivial.  Then   $\Gamma(X)_\circ$ is rationally H-commutative    and there is a rational H-equivalence
 $$ \Gamma(X)_\circ \simeq_\Q \map(B, G)_\circ. $$
 \end{theorem}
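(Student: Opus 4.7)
The plan is to identify $\Gamma(X)_\circ$ and $\map(B,G)_\circ$, up to rational H-equivalence, with a common H-space $\map(B, G_\Q)_\circ$ (endowed with pointwise multiplication) by transporting the fibrewise H-equivalence $X_\rat \simeq^B_B B \times G_\Q$ through section space constructions.

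First, the rational fibrewise H-triviality provides a fibrewise pointed H-equivalence $X_\rat \to B \times G_\Q$, where $B \times G_\Q$ carries the multiplication $1 \times m_{G_\Q}$. Applying $\Gamma(-)$ yields an H-equivalence $\Gamma(X_\rat)_\circ \simeq \Gamma(B \times G_\Q)_\circ = \map(B, G_\Q)_\circ$, with pointwise multiplication on the right. To relate this to the unlocalized spaces I invoke the standard result that, for a homotopy finite base and a fibrewise nilpotent space, fibrewise rationalization induces a rationalization on path components of section spaces (as in the work of M{\o}ller and others). Applied to $X \to X_\rat$ this gives a rationalization $\Gamma(X)_\circ \to \Gamma(X_\rat)_\circ$; applied to the trivial bundle $B \times G \to B$ it gives a rationalization $\map(B, G)_\circ \to \map(B, G_\Q)_\circ$, both H-maps with respect to pointwise multiplication. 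Composing the resulting rational H-equivalences yields $\Gamma(X)_\circ \simeq_\Q \map(B, G)_\circ$.

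For rational H-commutativity I argue first that $(G_\Q, m_{G_\Q})$ is itself H-commutative. By rational fibrewise H-triviality and \defref{def: equivalence of model multn}, the fibrewise multiplication of models $C$ for $X$ is equivalent, via a DG isomorphism $\phi$ that is the identity on $\B$ and commutes with the projections, to the standard $C_0$ with $C_0(w) = w + w'$. Since the twist $T$ interchanging the two copies of $\land W$ over $\B$ satisfies $T \circ C_0 = C_0$ and commutes with $1 \otimes \phi \otimes \phi$, conjugation of this cocommutativity identity through the equivalence yields $T \circ C \sim^\B_\B C$. Spatially, this says that $m_\rat$ is fibrewise H-commutative, and restricting to the fibre over a basepoint gives H-commutativity of $(G_\Q, m_{G_\Q})$. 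Pointwise multiplication in $\map(B, G_\Q)_\circ$ is then H-commutative, and the rational H-equivalence of the previous paragraph transports this property to $\Gamma(X)_\circ$.

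The main obstacle I anticipate is organizing the identifications so that the rationalization and the H-equivalence with $\map(B, G_\Q)_\circ$ are compatible with H-structures, not just with underlying homotopy types---in particular, tracking that the map $\Gamma(X)_\circ \to \Gamma(X_\rat)_\circ$ induced by fibrewise rationalization is an H-map landing on an H-space whose multiplication, under $\Gamma(f)$, becomes pointwise multiplication via $m_{G_\Q}$. Once this H-structure compatibility is in hand, the remaining steps---including the Sullivan-model deduction that $G_\Q$ is H-commutative---are routine.
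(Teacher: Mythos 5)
Your construction of the rational H-equivalence $\Gamma(X)_\circ \simeq_\Q \map(B,G)_\circ$ is essentially the paper's argument: M{\o}ller's theorem makes $\Gamma(X)_\circ \to \Gamma(X_\rat)_\circ$ a rationalization, and fibrewise H-triviality identifies the target, as an H-space under pointwise multiplication, with $\map(B,G_\Q)_\circ$. That part is fine.

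The gap is in your derivation of H-commutativity. You assert that rational fibrewise H-triviality yields, via Definition~\ref{def: equivalence of model multn}, an equivalence $(D,C)\equiv^\B_\B(d_B\otimes 1, C_0)$. It does not. Fibrewise H-triviality as defined in the paper only asks for an equivalence between $m_\rat$ and $1\times_B m_{G_\Q}$, where $m_{G_\Q}$ is the restriction of $m_\rat$ to the fibre; on models this is an equivalence $(D,C)\equiv^\B_\B(d_B\otimes 1, 1\otimes C_G)$ with $C_G$ the comultiplication modelling $m_G$, and $C_G$ need not be the standard cocommutative $C_0$. The stronger conclusion $C\equiv^\B_\B C_0$ is exactly what Theorem~\ref{LS} provides, and it requires associativity, which Theorem~\ref{thm:sec} does not assume. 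So your step of conjugating the identity $T\circ C_0=C_0$ through the equivalence has no valid starting point. What is needed instead, and what the paper uses, is the classical fact that a connected homotopy-finite H-space $G$ has H-commutative rationalization: $H_*(G;\Q)$ is a finite-dimensional connected graded Hopf algebra over $\Q$, hence an exterior algebra on odd-degree primitives with commutative Pontryagin product, so the coproduct $m^*$ on $H^*(G;\Q)$ is cocommutative; since $G_\Q$ is a product of rational Eilenberg-Mac\,Lane spaces, maps into it are detected on rational cohomology, whence $m_{G_\Q}\circ T\sim m_{G_\Q}$. With that in hand, $\map(B,G_\Q)_\circ$ is H-commutative and the rest of your transport argument goes through.
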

 \begin{proof}   
   By \cite[Th.5.3]{Mol}, fibrewise rationalization induces a rationalization map $$\Gamma(X)_\circ \to \Gamma(X_{(\Q)})_\circ.$$  Since $X_{(\Q)}$ is fibrewise H-trivial, the rational H-equivalence above follows.  Then  since $G$ is homotopy finite, $G_\Q$ is H-commutative and so $\map(B, G_\Q)_\circ$ is also.  
 \end{proof}
  
 Let $B$ be a finite complex and $G$ a homotopy finite topological group. Applying Theorems \ref{thm:gauge} and \ref{thm:sec},     we retrieve the fact that the gauge groups $\G(p)$ arising from  principal $G$-bundles $p \colon E \to B$   are all  rationally H-homotopy commutative and pairwise rationally H-homotopy equivalent \cite[Th.5.6]{KSS}.

  \bibliographystyle{amsplain}

\end{document}